\makeatletter \@addtoreset{equation}{section} \makeatother
\newcommand{\grad}{grad}
\newtheorem{theorem}{Theorem}[section]
\newtheorem{lemma}{Lemma}[section]
\newtheorem{proposition}{Proposition}[section]
\newtheorem{remark}{Remark}[section]
\newtheorem{assumption}{Assumption}[section]
\newtheorem{definition}{Definition}[section]
\title{Convergence of inexact descent methods for nonconvex optimization on Riemannian manifolds.}
\author{
G. C. Bento\thanks{IME, Universidade Federal de Goi\'as,
Goi\^ania, GO 74001-970, BR. (Email: {\tt glaydston@mat.ufg.br}). The author was supported in part by CNPq Grant 473756/2009-9 and PROCAD/NF. {\bf Corresponding author. Tel.: +55 62 3521 1418; fax: +55 62 3521 1180.}}
\and
 J. X. da Cruz Neto
\thanks{Centro de Ciências da Natureza, DM, Universidade Federal do Piau\'i,
Terezina, PI 64049-550, BR (Email: {\tt jcruzneto@uol.com.br}).
The author was supported in part by PRONEX-Optimization(FAPERJ/CNPq)DM.}\and
P. R. Oliveira \thanks{COPPE-Sistemas, Universidade Federal do Rio de Janeiro,
Rio de Janeiro, RJ 21945-970, BR (Email: {\tt poliveir@cos.ufrj.br}).
This author was supported in part by CNPq.}
}
\begin{document}
\maketitle

\begin{abstract}
In this paper we present an abstract convergence analysis of inexact descent methods in Riemannian context for functions satisfying Kurdyka-Lojasiewicz inequality. In particular, without any restrictive assumption about the sign of the sectional curvature of the manifold, we obtain full convergence of a bounded sequence generated by the proximal point method, in the case that the objective function is nonsmooth and nonconvex, and the subproblems are determined by a  quasi distance which does not necessarily coincide with the Riemannian distance.  Moreover, if the objective function is $C^1$ with $L$-Lipschitz gradient, not necessarily convex, but satisfying Kurdyka-Lojasiewicz inequality, full convergence of a bounded sequence generated by the steepest descent method is obtained. 
\end{abstract}
{\bf Keywords:} Minimization algorithms; nonconvex optimization; Kurdyka-Lojasiewicz
inequality; Riemannian manifolds.

\noindent{\bf AMS subject classification:} 40A05, 47J25, 49M37,  49J52, 65K05,  65K15, 90C26,  90C56, 58C99. 

\section{Introduction} 
%%%%%%%%%%%%%%%%%%%%%%%%%%%%%%%%%%%%%%%%%%%%%%%%%%%%%%%%%%%%%%%%%%%%%%%%%%%%%%%%%%%%%
%%%%%%%%%%%%%%%%%%%%%%%%%%%%%%%%%%%%%%%%%%%%%%%%%%%%%%%%%%%%%%%%%%%%%%%%%%%%%%%%%%%%%
Consider the following minimization problem
\begin{eqnarray}\label{po:conv777}
\begin{array}{clc}
   & \min  f(x) \\
   & \textnormal{s.t.}\,\,\, x\in M,\\
\end{array}
\end{eqnarray}
where $M$ is a complete Riemannian manifold and $f:M\to\mathbb{R}\cup\{+\infty\}$ is a proper lower semicontinuous function bounded from below. The exact proximal point method to solve optimization problems of the
form \eqref{po:conv777} generates, for a starting point $x^0\in M$, a sequence $\{x^k\}\subset M$ as follows:
\begin{equation}\label{eq:prox1}
x^{k+1}\in \mbox{argmin}_{y\in M}\left\{f(y)+\lambda_k d^2(y,x^k)\right\},
\end{equation}
where $\{\lambda_k\}$ is a sequence of positive numbers and $d$ is the Riemannian distance (see Section~\ref{sec2} for a definition). This method was first considered in this context by Ferreira and Oliveira \cite{FO2000}, in the particular case that $M$ is a Hadamard manifold (see Section~\ref{sec2} for a definition), $\mbox{dom}f=M$ and $f$ is convex. They proved that, for each $k\in \mathbb{N}$, the function $f(.)+d^2(.,x^k):M\to\mathbb{R}$ is 1-coercive and, consequently, that the sequence $\{x^k\}$ is well-defined, with $x^{k+1}$ being uniquely determined. Moreover, supposing $\sum_{k=0}^{+\infty}1/\lambda_k=+\infty$ and that $f$ has a minimizer, the authors proved convergence of the sequence $\{f(x^k)\}$ to the minimum value and convergence of the sequence $\{x^k\}$ to a minimizer point. Li et al.~\cite{Chong Li2009} extended this method for finding singularity of a multivalued vector field and proved that the generated sequence is well-defined and converges to a singularity of a maximal monotone vector field, whenever it exists.

In the last three decades, several authors have proposed the generalized proximal point method for certain nonconvex minimization problems. As far as we know the first direct generalization, in the case that $M$ is a Hilbert space, has been performed by Fukushima and Mine~\cite{Fukushima1981}. See Kaplan and Tichatschke~\cite{Kaplan1998} for a review. For the problem of finding singularities of multivalued operators, that situation is similar to the case where there is no monotonicity (e.g. where the operator is hypomonotone), see e.g., Spingarn and Jonathan \cite{Spingarn1982}, Pennanen \cite{pennanen2002}, Iusem et al. \cite{pennanen2003}, Combettes and Pennanen \cite{Combettes2004}, Garciga and Iusem \cite{Iusem2007}. In the Riemannian context, Papa Quiroz and Oliveira \cite{PO2009} considered the proximal point method for quasiconvex function (not necessarily convex) and proved full convergence of the sequence $\{x^k\}$ to a minimizer point with $M$ being a Hadamard manifold. Bento et al. \cite{BFO2010} considered the proximal point method for $C^1$-lower type functions and obtained local convergence of the generated sequence to a minimizer, also in the case that $M$ is a Hadamard manifold. 

So far, in the convergence analysis of the exact proximal point method for solving either convex minimization problems (resp. find singularities of monotone vector field) how nonconvex minimization problems, it was necessary to consider Hadamard type manifolds. This is because the convergence analysis is based on Fejér convergence to the minimizers set of $f$ (resp. to the singularities set of the vector field), and these manifolds, apart from having the same topology and structure differentiable from Euclidean space, also have geometric properties satisfactory to the characterization of Fejér convergence of the sequence. 

In the Riemannian context we raise the following issue:
\vspace{0.5cm}

{\bf Problem 1.} {\it Would it be possible to obtain the convergence of the whole sequence generated by the method \eqref{eq:prox1} for $f$ not necessarily convex or quasiconvexa, and $M$ not necessarily Hadamard?}

Considering again Problem~\eqref{po:conv777} now with $f$ continuously differentiable and $\mbox{dom}f=M$, given $x^0\in M$ the classic steepest descent method generates a sequence $\{x^k\}$ given by
\begin{equation}\label{eq:sdm1}
x^{k+1}=\exp_{x^k}(-t_k\mbox{grad} f(x^k)),
\end{equation}
where exp is the exponential map and $t_k$ is some positive stepsize. 
As far as we know, this method was first studied by Luemberger~\cite{Luenberger1972} and later by Gabay~\cite{Gabay1982} both in the particular case where $M$ is the inverse image of regular value. Udriste~\cite{U94}, Smith~\cite{S94}, Rapcs\'ak~\cite{RAP97} also studied whose method in the case that $M$ is a any complete Riemannian manifold and partial convergence results were obtained. For the convex case, the full convergence, using Armijo's rule and 'fixed step' was obtained by da Cruz Neto et al. \cite{XLO1998}, in the particular case where $M$ has nonnegative sectional curvature. Regarding  the same restrictive assumption on the manifold $M$, Papa Quiroz et al. \cite{PP08} proved a full convergence result using a generalized Armijo's rule for the quasiconvex case. Note that the results of convergence presented in \cite{XLO1998} (resp. \cite{PP08}) for the steepest descent method for solving convex minimization problems depend, besides the assumption of convexity (resp. quasi-convexity) on function $f$, of the sign of the sectional curvature of M. This is because the convergence analysis is based on quasi-Fejér convergence to the minimizers set of $f$ and these manifolds have geometric properties favorable to the characterization of the quasi-Fejér convergence of the sequence. 

In the same context of Problem 1, we raise the following issue:
\vspace{0.5cm}

{\bf Problem 2.} {\it Would it be possible to obtain the convergence of the whole sequence generated by the method \eqref{eq:sdm1} for $f$ not necessarily convex or quasiconvexa, and without restrictive assumption on the sign of the sectional curvature of the manifold $M$?}

This paper has the same spirit of the work of Alvarez et al. \cite{Alvarez2008} in which the authors proved a unified result
for the existence and local uniqueness of the solution, and for the local convergence
of a Riemannian version of Newton's method. Here we are interested in providing an unified framework for the convergence analysis of classical descent methods which, in particular, includes the methods  \eqref{eq:prox1} and \eqref{eq:sdm1}, and answer Problems 1 and 2. To achieve this goal, we assume, as main assumption, that the objective function satisfies a well-known property as Kurdyka-Lojasiewicz inequality. This inequality was introduced by Kurdyka \cite{Kurdyka1998}, for differentiable functions definable in an o-minimal structure defined in $\mathbb{R}^n$, through the following result:

{\it Given $U\subset\mathbb{R}^n$ a bounded open set and $g:U\to\mathbb{R}_+$ a differentiable function definable on a o-minimal structure, there exists $c,\eta>0$ and a strictly increasing positive function definable $\varphi:\mathbb{R}_+\to\mathbb{R}$ of class $C^1$, such that
\begin{equation}\label{ineq:Kurdyka1}
\|\nabla \left(\varphi \circ g\right)(x)\|\geq c, \qquad x\in U\cap g^{-1}(0,\eta).
\end{equation}}
Note that taking $\varphi(t)=t^{1-\alpha}$, $\alpha\in [0,1)$, the inequality \eqref{ineq:Kurdyka1} yields
\begin{equation}\label{ineq:Kurdyka2}
\|\nabla g(x)\|\geq c|g(x)|^{\alpha}, 
\end{equation}
where $c=1/(1-\alpha)$, which is known as Lojasiewicz inequality, see \cite{Lojasiewicz1963}. Absil et al. \cite{Absil2005} established an abstract result of convergence for sequences (satisfying a strong descent condition) in the case where the objective function is analytic, defined on $\mathbb{R}^n$, which satisfies \eqref{ineq:Kurdyka2}. For extensions of Kurdyka-Lojasiewicz inequality to subanalytic nonsmooth functions (defined in Euclidean spaces) see, for example, Bolte et al. \cite{Bolte2006},  Attouch and Bolte~\cite{Attouch2009}.  A more general extension, yet in the context Euclidean, was developed by Bolte et al. \cite{Bolte2007} mainly for Clarke's subdifferentiable of a lower semicontinuous function definable in an o-minimal structure. Also in the Euclidean context, Attouch et al.~\cite{Attouch2010} presented a general convergence result for Inexact gradient methods in the case that the objective function satisfies \eqref{ineq:Kurdyka1}. Lageman~\cite{Lageman2007} extended the Kurdyka-Lojasiewicz inequality~\eqref{ineq:Kurdyka1} for analytic manifolds and differentiable $\cal{C}$-functions in an analytic-geometric category (satisfying a certain descent condition, namely, angle and Wolfe-Powell conditions) and established an abstract result of convergence of descent method, see \cite[Theorem 2.1.22]{Lageman2007}. In particular, Lageman observed that the answer to {\bf Problem 2} is positive (see Example of Theorem 2.1.22, page 96 of \cite{Lageman2007}).  It is important to note that Kurdyka et al.~\cite{Kurdyka2000} had already established an extension of the inequality~\eqref{ineq:Kurdyka2} for analytic manifolds and analytic functions to solve R. Thom's conjecture. 

The paper is organized as follows. In Section~\ref{sec2} we recorded some basic definitions and results
of the theory of Riemannian manifolds. In Section~\ref{sec3} we present elements of nonsmooth analysis on manifold. In Section~\ref{sec4} we present the Kurdyka-Lojasiewicz inequality in the Riemannian context  and recall some basic notions on o-minimal structures on $(\mathbb{R} ,+,\cdot )$ and analytic-geometric categories. In Section~\ref{sec5} we present an abstract converge analysis of inexact descent methods for functions satisfying Kurdyka-Lojasiewicz inequality. In Section~\ref{sec6} we recall the exact proximal point method in the Riemannian context an inexact version of it in that context. Finally, in Section~\ref{sec7} we recall the gradient method in the Riemannian context and we extend some the convergence results for KL functions.

\section{Preliminary of Riemanian Geometry}\label{sec2}

In this section, we introduce some fundamental properties and notations of Riemannian manifold. These basics facts can be found in any introductory book to Riemannian geometry, such as in do Carmo \cite{MP92} or Sakai \cite{S96}.

Let $M$ be a $n$-dimentional connected manifold. We denote by $T_xM$ the $n$-dimentional {\it tangent space} of $M$ at $x$, by $TM=\cup_{x\in M}T_xM$ {\itshape{tangent bundle}} of $M$ and by ${\cal X}(M)$ the space of smooth vector fields over $M$. When $M$ is endowed with a Riemannian metric $\langle .\,,\,. \rangle$, with the corresponding norm denoted by $\| . \|$, then $M$ is a Riemannian manifold. Remember that the metric can be used to define the length of piecewise smooth curves $\gamma:[a,b]\rightarrow M$ joining $x$ to $y$, i.e., such that $\gamma(a)=x$ and $\gamma(b)=y$, by:
\[
l(\gamma)=\int_a^b\|\gamma^{\prime}(t)\|dt,
\]
and, moreover, by minimizing this length functional over the set of all such curves, we obtain a Riemannian distance $d(x,y)$ inducing the original topology on $M$. We denote by $B(x,\epsilon)$ the Riemannian ball on $M$ with center $x$ and radius $\epsilon>0$. The metric induces a map $f\mapsto\mbox{grad} f\in{\cal X}(M)$ which, for each function smooth over $M$, associates its gradient via the rule $\langle\mbox{grad} f,X\rangle=d f(X),\ X\in{\cal X}(M)$. Let $\nabla$ be the Levi-Civita connection associated with $(M,{\langle} \,,\, {\rangle})$. In each point $x\in M$, we have a linear map $A_X(x) \colon T_xM \to T_xM$ defined by:
%%%%
\begin{equation}
A_X(x)v=\nabla_vX.
\end{equation}
%%%%
If $X=\mbox{grad} f$, where $f\colon M \to\mathbb{R}$ is a twice differentiable function, then $A_X(x)$ is the {\it Hessian\/} of $f$ at $x$ and is denoted by $\text{Hess}f$. A vector field $V$ along $\gamma$ is said to be {\it parallel} if $\nabla_{\gamma^{\prime}} V=0$. If $\gamma^{\prime}$ itself is parallel we say that $\gamma$ is a {\it geodesic}. Given that the geodesic equation $\nabla_{\ \gamma^{\prime}} \gamma^{\prime}=0$ is a second-order nonlinear ordinary differential equation, we conclude that the geodesic $\gamma=\gamma _{v}(.,x)$ is determined by its position $x$ and velocity $v$ at $x$. It is easy to verify that $\|\gamma ^{\prime}\|$ is constant. We say that $ \gamma $ is {\it normalized} if $\| \gamma ^{\prime}\|=1$. The restriction of a geodesic to a closed bounded interval is called a {\it geodesic segment}. A geodesic segment joining $x$ to $y$ in $ M$ is said to be {\it minimal} if its length equals $d(x,y)$ and the geodesic in question is said to be a {\it minimizing geodesic}. If $\gamma$ is a geodesic joining points $x$ and $y$ in $ M$ then, for each $t\in [a,b]$, $\nabla$ induces a linear isometry, relative to ${ \langle}\, ,\, {\rangle}$, $P_{\gamma(a)\gamma(t)}:T_{\gamma(a)}M\to T_{\gamma(t)}M$, the so-called {\it parallel transport} along $\gamma$ from $\gamma(a)$ to $\gamma(t)$. The inverse map of $P_{\gamma(a)\gamma(t)}$ is denoted by $P_{\gamma(a)\gamma(t)}^{-1}:T_{\gamma(t)} M \to T_{\gamma(a)}M$. In the particular case of $\gamma$ to be the unique geodesic segment joining $x$ and $y$, then the parallel transport along $\gamma$ from $x$ to $y$ is denoted by $P_{xy}:T_{x}M\to T_{y}M$.

A Riemannian manifold is {\it complete} if the geodesics are defined for any values of $t$. Hopf-Rinow's theorem (see, for example, Theorem 2.8, page 146 of \cite{MP92} or Theorem 1.1, page 84 of \cite{S96}) asserts that if this is the case then any pair of points, say $x$ and $y$, in $M$ can be joined by a (not necessarily unique) minimal geodesic segment. Moreover, $( M, d)$ is a complete metric space so that bounded and closed subsets are compact. From the completeness of the Riemannian manifold $M$, the {\it exponential map} $\exp_{x}:T_{x}  M \to M $ is defined by $\exp_{x}v\,=\, \gamma _{v}(1,x)$, for each $x\in M$. We denote by $B_{\epsilon}(0_{x})$ the ball in the tangent space $T_xM$ with center $0_x$ and radius $\epsilon>0$. Since the $D\exp_{x}(0_{x})$ is the identity, then by inverse mapping theorem there exists an $\epsilon >0$ such that $exp_{x}\mid_{B_{\epsilon}(0_{x})}$ is a diffeomorphism onto an open set $\mathcal{U}_x$ in $M$ containing $x$. We call the open set $\mathcal{U}_x$ a normal neighbourhood of $x$. It can be shown that, for each $v\in B_{\epsilon}(0_{x})$,
\begin{equation}\label{normal}
y=exp_{x}(v)\in \mathcal{U}_x \Rightarrow d(x,y)=\|v\|.
\end{equation}

We denote by $R$ {\it the curvature tensor \/} defined by
$R(X,Y)=\nabla_{X}\nabla_{Y}Z- \nabla_{Y}\nabla_{X}Z-\nabla_{[Y,X]}Z$, with $X,Y,Z\in{\cal X}(M)$, where $[X,Y]=YX-XY$. Moreover, the {\it sectional curvature \/} as related 
to $X$ and $Y$ is given by $K(X,Y)=\langle R(X,Y)Y , X\rangle /(||X||^{2}||X||^{2}-\langle X\,,\,Y\rangle ^{2})$, where $||X||=\langle X,X\rangle ^{2}$. If $K(X,Y)\leqslant 0$ for all $X$ and $Y$, then $M$ is called  a {\it Riemannian manifold of nonpositive curvature \/} and we use the short notation $K\leqslant 0$.

A complete, simply connected Riemannian manifold of nonpositive sectional curvature is called a {\it{Hadamard manifold}}. It is known that if $M$ is a Hadamard manifold, then $M$ has the same topology and differential structure of the Euclidean space $\mathbb{R}^n$, see for example \cite[Lemma 3.2, page 149]{MP92} or \cite[Theorem 4.1, page 221]{S96}. Furthermore, some similar geometrical properties are known of the Euclidean space $\mathbb{R}^n$, such as that where, given two points, there
exists an unique geodesic segment that joins them. 

{\it In this paper $M$ denote a complete n-dimensional Riemannian manifold}.

\section{Nonsmooth analysis on manifold} \label{sec3}

In this section we present elements of nonsmooth analysis on manifold, which can be found, for example, in YU et al \cite{Zhu2007}. 

Let $f:M\to \mathbb{R} \cup \{+\infty\}$ be a real extended-valued function and denoted by 
\[
\mbox{dom}f:=\{x\in M : f(x) < +\infty\}
\] 
its domain. We recall that $f$ is said to be proper when $\mbox{dom}f\neq\emptyset$.

\begin{definition}\label{eq:subfrechet}
Let $f$ be a lower semicontinuous function. The Fréchet-subdifferential of $f$ at $x\in M$ is defined by
\[
\hat{\partial}f(x)=
\left\{\begin{array}{cccc}
\{dh_x: h\in C^1(M)\;\mbox{and}\; f-h\; \mbox{attains a local minimum at}\; x\},\quad \mbox{if}\quad x \in \mbox{dom} f\\
\hspace{0.5cm}\emptyset,\quad\qquad \qquad\qquad\qquad  \qquad \qquad\qquad\qquad  \qquad \qquad\qquad\qquad\mbox{if}\quad x \notin \mbox{dom} f,
\end{array}\right.
\]
where $dh_x\in (T_xM)^\ast$ is given by $dh_x(v) = \langle \mbox{grad\;} h(x),v\rangle$, $v \in T_xM$. 
\end{definition}
Note that if $f$ is differentiable at $x$, then $\hat{\partial} f (x)=\{\grad f(x)\}$.
\begin{definition}\label{eq:sublimiting}
Let $f$ be a lower semicontinuous function. The (limiting) subdifferential of $f$ at $x\in M$ is defined by
\[
\partial f(x):=\{v\in T_xM: \exists (x^n, v^n)\in\mbox{Graph}(\hat{\partial}f)\; \mbox{with}\; (x^n,v^n)\to (x,v),\; f(x^n)\to f(x)\},
\]
where $\mbox{Graph}(\hat{\partial}f):=\{(y,u)\in TM: u\in\hat{\partial}f(y)\}$.
\end{definition}

It follows directly from Definitions \ref{eq:subfrechet} and \ref{eq:sublimiting} that $\hat{\partial}f(x)\subset \partial f(x)$. Note also that, $\hat{\partial}f(x)$ may be empty, but if $f$ attains a local minimum at $x$, then $0\in \hat{\partial}f(x)$. A necessary (but not sufficient) condition for $x\in M$ to be a minimizer of $f$ is
\[
0\in\partial f(x).
\] 
A point $x\in M$ satisfying the above inclusion is called limiting-critical or simply critical.
\begin{proposition}\label{regracadeiaZhu}
Let $f :M\to\mathbb{R}\cup\{+\infty\}$ be a lower semicontinuous function. Suppose that
$(U,\phi)$ is a local coordinate neighborhood and $x\in U$. Then,
\[
\partial f(x)=(\phi^*_x)\partial (f\circ \phi^{-1})(\phi(x)),
\]
where $\phi^*_x$ denote the Fréchet derivative adjunct of the function $\phi$.
\end{proposition}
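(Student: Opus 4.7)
I would prove the identity in two stages: first establish the corresponding chain rule for the Fr\'echet subdifferential $\hat{\partial}$ under the diffeomorphism $\phi$, and then pass to the limit to obtain the statement for the limiting subdifferential $\partial$. Throughout, I interpret $\phi^*_x$ as the adjoint $(d\phi_x)^{*}\colon \mathbb{R}^n\to T_xM$ of the linear isomorphism $d\phi_x\colon T_xM\to \mathbb{R}^n$, with respect to the Riemannian inner product on $T_xM$ and the standard inner product on $\mathbb{R}^n$; this is consistent with the identification $\hat{\partial}f(x)\subset T_xM$ implicit in Definition~\ref{eq:subfrechet} via $dh_x\leftrightarrow \grad h(x)$.

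\textbf{Step 1 (Fr\'echet case).} Suppose $v\in \hat{\partial}f(x)$, so $v=\grad h(x)$ for some $h\in C^1(M)$ with $f-h$ attaining a local minimum at $x$. Then $f\circ\phi^{-1}-h\circ\phi^{-1}$ attains a local minimum at $\phi(x)$ and $h\circ\phi^{-1}$ is $C^1$ on $\phi(U)$, so (after a routine smooth cut-off to get a $C^1$ function on all of $\mathbb{R}^n$, which affects neither $\hat{\partial}$ nor the local minimum property)
\[
\nabla(h\circ\phi^{-1})(\phi(x))\in\hat{\partial}(f\circ\phi^{-1})(\phi(x)).
\]
The classical chain rule in coordinates gives $d(h\circ\phi^{-1})_{\phi(x)}=dh_x\circ (d\phi_x)^{-1}$, so applying the Riesz identification yields $\nabla(h\circ\phi^{-1})(\phi(x))=((d\phi_x)^{*})^{-1}v$, whence $v=(d\phi_x)^{*}w$ with $w\in\hat{\partial}(f\circ\phi^{-1})(\phi(x))$. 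The reverse inclusion is obtained by the symmetric argument applied to $\phi^{-1}$, giving the Fr\'echet chain rule $\hat{\partial}f(x)=(\phi^*_x)\hat{\partial}(f\circ\phi^{-1})(\phi(x))$.

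\textbf{Step 2 (Limiting case).} Let $v\in\partial f(x)$ with a witnessing sequence $(x^k,v^k)\in \mathrm{Graph}(\hat{\partial}f)$, $(x^k,v^k)\to (x,v)$ in $TM$, $f(x^k)\to f(x)$. For $k$ large, $x^k\in U$, so Step~1 produces $w^k\in\hat{\partial}(f\circ\phi^{-1})(\phi(x^k))$ with $v^k=(d\phi_{x^k})^{*}w^k$, i.e.\ $w^k=((d\phi_{x^k})^{*})^{-1}v^k$. Using the local trivialization of $TU$ induced by $\phi$, convergence $(x^k,v^k)\to (x,v)$ in $TM$ is equivalent to $\phi(x^k)\to\phi(x)$ together with convergence of the coordinate vectors of $v^k$ to that of $v$; since $x\mapsto (d\phi_x)^{*}$ is smooth in these coordinates, the inverse adjoint depends continuously on the base point, yielding $w^k\to w:=((d\phi_x)^{*})^{-1}v$ in $\mathbb{R}^n$. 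Also $(f\circ\phi^{-1})(\phi(x^k))=f(x^k)\to f(x)=(f\circ\phi^{-1})(\phi(x))$, so by Definition~\ref{eq:sublimiting} $w\in\partial(f\circ\phi^{-1})(\phi(x))$, and $v=(d\phi_x)^{*}w$. The reverse inclusion follows by the analogous argument with $\phi$ replaced by $\phi^{-1}$.

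\textbf{Main obstacle.} The delicate point is the second step: the vectors $v^k$ live in \emph{different} tangent spaces $T_{x^k}M$, so one has to commit to a precise meaning of $(x^k,v^k)\to (x,v)$ in $TM$ and then verify that applying the fibrewise isomorphism $((d\phi_{\cdot})^{*})^{-1}$ is continuous in the base point. I would handle this by doing all the bookkeeping in the chart $\phi$, where $TU\cong \phi(U)\times\mathbb{R}^n$ and both $d\phi_x$ and its adjoint are represented by smooth matrix-valued maps, reducing continuity of the fibrewise adjoint to that of a smoothly varying invertible matrix. The smooth cut-off needed in Step~1 to turn a locally defined $C^1$ test function into an element of $C^1(M)$ is standard and does not interact with the analysis.
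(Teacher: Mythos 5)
Your argument is correct, but it is worth noting that the paper does not actually prove this proposition: it simply cites Corollary~4.2 of Ledyaev and Zhu \cite{Zhu2007}, so your two-step proof is a self-contained reconstruction of that result rather than a parallel to anything in the text. Your route is the natural one and it works: Step~1 is a clean consequence of the test-function definition of $\hat{\partial}$ in Definition~\ref{eq:subfrechet}, since $h\mapsto h\circ\phi^{-1}$ gives a bijection (up to the standard cut-off) between admissible test functions at $x$ and at $\phi(x)$, and the classical chain rule plus the Riesz identification turns $dh_x$ into $(d\phi_x)^{*}\nabla(h\circ\phi^{-1})(\phi(x))$; Step~2 then passes to the limiting subdifferential of Definition~\ref{eq:sublimiting} using continuity of the fibrewise isomorphism $((d\phi_{\cdot})^{*})^{-1}$ in the chart trivialization, which is exactly the point that needs care and which you address. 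Two small remarks: you are implicitly resolving a type mismatch in the paper (Definition~\ref{eq:subfrechet} produces covectors $dh_x\in(T_xM)^{*}$ while Definition~\ref{eq:sublimiting} treats $\partial f(x)$ as a subset of $T_xM$), and your metric identification of $\phi^{*}_x$ with $(d\phi_x)^{*}\colon\mathbb{R}^n\to T_xM$ is the interpretation under which the displayed formula is literally true; in \cite{Zhu2007} the statement is phrased with cotangent vectors and the pullback, which is equivalent. The payoff of your approach is that the paper's reliance on an external corollary is replaced by an elementary argument using only the two definitions already given in Section~\ref{sec3}; the cost is that you must supply the cut-off construction and the viscosity-type equivalence of the Euclidean Fr\'echet subdifferential with its test-function description, both standard.
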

\begin{proof}
See \cite[Corollary 4.2]{Zhu2007}.
\end{proof}
\section{Kurdyka-Lojasiewicz inequality on Riemannian manifolds}\label{sec4}
In this section we present the Kurdyka-Lojasiewicz inequality in the Riemannian context and recall some basic notions on o-minimal structures on $(\mathbb{R} ,+,\cdot )$ and analytic-geometric categories. Our main interest here is to observe that the Kurdyka-Lojasiewicz inequality, in Riemannian context, holds for lower semicontinuous functions, not necessarily differentiable. The differentiable case was presented by Lageman~\cite[Corollary 1.1.25]{Lageman2007}. It is important to note that Kurdyka et al.~\cite{Kurdyka2000} had already established such inequality for analytic manifolds and analytic functions. For a detailed discussion on o-minimal structures and analytic geometric categories see, for example, Dries and Miller~\cite{Dries1996}, and references therein.

Let $f:M\to \mathbb{R} \cup \{+\infty\}$ be a proper lower semicontinuous function and we consider the following sets:
\begin{itemize}
\item $dist(0,\partial f(x)):=inf\{\| v \|: v \in \partial f(x)\}$,
\item $[\eta_1 <f<\eta_2]:=\{x\in M: \eta_1 < f(x) < \eta_2\},\quad -\infty<\eta_1<\eta_2<+\infty$.
\end{itemize}
\begin{definition}\label{dkl}
The function $f$ is said to have the Kurdyka-Lojasiewicz property at $\bar{x}\in\mbox{dom}\; \partial f$ if there exists $\eta \in (0,+\infty]$, a neighborhood $U$ of $\bar{x}$ and a continuous concave function $\varphi:[0,\eta)\rightarrow \mathbb{R}_+$ such that:
\begin{itemize}
\item [(i)] $\varphi(0)=0$, $\varphi\in C^1(0,\eta)$ and, for all $s\in(0,\eta)$, $\varphi '(s)>0;$
\item [(ii)] for all $x\in U\cap [f(\bar{x})<f<f(\bar{x})+\eta]$, the Kurdyka-Lojasiewicz inequality holds
\begin{equation}\label{eq:kur100}
\varphi '(f(x)-f(\bar{x}))dist(0, \partial f(x))\geq 1.
\end{equation}
\end{itemize}
We call $f$ a KL function, if it satisfies the Kurdyka-Lojasiewicz inequality at each point of \mbox{dom}$\partial f$.
\end{definition}

Next we show that if $\bar{x}$ is a noncritical point of a lower semicontinuous function then the Kurdyka-Lojasiewicz inequality holds in $\bar{x}$.
\begin{lemma}\label{Lem3.2}
Let $f:M\to\mathbb{R}\cup\{+\infty\}$ be a proper lower semicontinuous function and $\bar{x}\in \mbox{dom}\partial f$ such that $0\notin \partial f(\bar{x})$. Then, the Kurdyka-Lojasiewicz inequality holds in $\bar{x}$.
\end{lemma}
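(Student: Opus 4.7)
The plan is to exhibit an explicit (in fact, linear) function $\varphi$ that works, after first establishing a uniform lower bound on $\mbox{dist}(0,\partial f(x))$ for $x$ near $\bar x$ with $f(\bar x)<f(x)<f(\bar x)+\eta$. The argument rests on a sequential closedness property of the limiting subdifferential, which is built into Definition~\ref{eq:sublimiting} essentially by construction.

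First I would dispose of the trivial case: if there exist a neighborhood $U$ of $\bar x$ and $\eta>0$ such that $U\cap[f(\bar x)<f<f(\bar x)+\eta]=\emptyset$, then condition (ii) of Definition~\ref{dkl} is vacuously satisfied for any admissible $\varphi$ (e.g.\ $\varphi(s)=s$), so we are done. Assume henceforth that for every such $U$ and $\eta$ the intersection is nonempty.

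The core step is the following claim: there exist a neighborhood $U$ of $\bar x$, a constant $\eta>0$ and $c>0$ such that
\begin{equation*}
\mbox{dist}(0,\partial f(x))\geq c\qquad\text{for all }x\in U\cap[f(\bar x)<f<f(\bar x)+\eta].
\end{equation*}
I would prove this by contradiction: if no such triple $(U,\eta,c)$ exists, then one extracts a sequence $x_n\to\bar x$ with $f(\bar x)<f(x_n)<f(\bar x)+1/n$ and vectors $v_n\in\partial f(x_n)$ with $\|v_n\|\to 0$. Note that $f(x_n)\to f(\bar x)$ is forced by the squeeze $f(\bar x)<f(x_n)<f(\bar x)+1/n$. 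Since each $v_n$ is a limiting subgradient at $x_n$, by definition it is itself a limit of pairs $(y_n^k,u_n^k)\in\mbox{Graph}(\hat\partial f)$ with $f(y_n^k)\to f(x_n)$; a standard diagonal extraction then produces a sequence $(y_n^{k_n},u_n^{k_n})\to(\bar x,0)$ in $\mbox{Graph}(\hat\partial f)$ with $f(y_n^{k_n})\to f(\bar x)$, giving $0\in\partial f(\bar x)$ and contradicting the hypothesis. The diagonal extraction is the step I expect to be the main technical obstacle; if one wishes to avoid verifying it intrinsically on $M$, it can be pushed down to Euclidean space via the chain rule of Proposition~\ref{regracadeiaZhu} applied in a normal coordinate chart, reducing the matter to the standard fact that the limiting subdifferential on $\mathbb R^n$ has sequentially closed graph with respect to $f$-convergence.

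Once the claim is established, set $\varphi(s):=s/c$ on $[0,\eta)$. Then $\varphi$ is continuous and concave (being affine), $\varphi(0)=0$, $\varphi\in C^1(0,\eta)$ with $\varphi'(s)=1/c>0$, so hypothesis (i) of Definition~\ref{dkl} is met. For every $x\in U\cap[f(\bar x)<f<f(\bar x)+\eta]$ the claim yields
\begin{equation*}
\varphi'(f(x)-f(\bar x))\,\mbox{dist}(0,\partial f(x))=\frac{1}{c}\,\mbox{dist}(0,\partial f(x))\geq 1,
\end{equation*}
which is exactly \eqref{eq:kur100}. This establishes the Kurdyka-Lojasiewicz inequality at $\bar x$.
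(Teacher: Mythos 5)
Your proof is correct and follows essentially the same route as the paper's: both establish a uniform positive lower bound on $\mbox{dist}(0,\partial f(x))$ for $x$ near $\bar{x}$ in the slice $[f(\bar{x})<f<f(\bar{x})+\eta]$ by a contradiction argument resting on the closedness of $\mbox{Graph}(\partial f)$ under $f$-attentive convergence, and then take $\varphi$ linear. If anything, your formulation of the key claim (some constant $c>0$ works, rather than the paper's specific claim that $\mbox{dist}(0,\partial f(x))\geq \mbox{dist}(0,\partial f(\bar{x}))$ on a ball of radius $\delta/2$) is the more robust one, and your explicit acknowledgment of the diagonal-extraction step matches the level of detail the paper itself glosses over with the phrase ``$\partial f$ is a closed mapping.''
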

\begin{proof} 
Since $\bar{x}$ is a noncritical point of $f$ and $\partial f(\bar{x})$ is a closed set, we have that 
\[
\delta:=dist(0,\partial f(\bar{x}))>0.
\]
Take $\varphi(t):=t/\delta$, $U:=B(\bar{x},\delta/2)$, $\eta:=\delta/2$ and note that, for each $x\in \mbox{dom}\partial f$,
\begin{equation}\label{eq:ineq100}
\varphi'(f(x)-f(\bar{x}))dist(0,\partial f(x))=dist(0,\partial f(x))/\delta.
\end{equation}
Now, for each $x\in U\cap [f(\bar{x})-\eta<f<f(\bar{x})+\eta]$ arbitrary, note that
\[
d(x,\bar{x})+|f(x)-f(\bar{x})|<\delta.
\]
We state that, for each $x$ satisfying the last inequality, it holds
\begin{equation}\label{eq:ineq101}
dist(0,\partial f(x))\geq \delta.
\end{equation}
Let us suppose, by contradiction, that this does not holds. Then, there exist sequences $\{(y^k,v^k)\}\subset\mbox{Graph}\partial f$ and $\{\delta^k\}\subset\mathbb{R}_{++}$ such that 
\[
d(y^k,\bar{x})+|f(y^k)-f(\bar{x})|<\delta^k,\quad \mbox{and}\quad \|v^k\|\leq \delta^k,
\]
with $\{\delta^k\}$ converging to zero. Thus, using that $\{(y^k,v^k)\}$ and $\{f(y^k)\}$ converge to $(\bar{x},0)$ and $f(\bar{x})$ respectively, and $\partial f$ is a closed mapping, it follows that $\bar{x}$ is a critical point of $f$, which proves the statement. Therefore, the result of the lemma follows by combining \eqref{eq:ineq100} with \eqref{eq:ineq101}.
\end{proof}

It is known that a $C^2$-function $f:M\to\mathbb{R}$ is a Morse function if each critical point $\bar{x}$ of $f$ is nondegenerate, i.e, if \mbox{Hess}$f(\bar{x})$ has all its eigenvalues different of zero. From the inverse function theorem it follows that the critical points of a Morse function are isolated. It is also known, see \cite[Theorem 1.2, page 147 ]{Hirsh76}, that Morse functions form a dense and open set in the space of $C^2$-function, more precisely 
\begin{theorem}
Let $M$ be a manifold and denote by $C^r(M,\mathbb{R})$, the set of all the $C^r$-functions $g:M\to\mathbb{R}$. The collection of all the Morse functions form a dense and open set in $C^r(M,\mathbb{R})$, $2\leq r\leq +\infty$.
\end{theorem}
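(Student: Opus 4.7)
The plan is to establish the two claims—openness and density of the set of Morse functions in $C^r(M,\mathbb{R})$—separately, treating each by a local argument and then globalizing. Throughout I would equip $C^r(M,\mathbb{R})$ with the (strong) $C^r$ Whitney topology so that smallness of perturbations can be controlled on each compact set of a locally finite atlas.

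For openness, I would fix a Morse function $f$ and show that every $g$ in a suitable $C^2$-neighborhood of $f$ is still Morse. The key observation is that since $\text{Hess}\, f(\bar{x})$ is invertible at each critical point $\bar{x}$, the map $\text{grad}\, f$ is a local diffeomorphism near $\bar{x}$ by the inverse function theorem, so the critical points of $f$ are isolated. In a normal chart around $\bar{x}$, any $g$ that is $C^2$-close to $f$ satisfies $\text{grad}\, g$ close to $\text{grad}\, f$ in $C^1$; the implicit function theorem then produces a unique critical point $\bar{x}_g$ of $g$ near $\bar{x}$, at which $\text{Hess}\, g(\bar{x}_g)$ is close to $\text{Hess}\, f(\bar{x})$ and thus invertible. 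Covering a compact exhaustion of $M$ by finitely many such neighborhoods plus their complements (where $\|\text{grad}\, f\|$ is bounded away from zero) yields openness.

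For density, I would work in a coordinate chart $(U,\phi)$ and identify $f$ with $\tilde f:\phi(U)\to\mathbb{R}$. For $a\in\mathbb{R}^n$, the perturbation $\tilde f_a(x):=\tilde f(x)-\langle a,x\rangle$ has critical points precisely where $\nabla\tilde f(x)=a$, and these are nondegenerate exactly when $a$ is a regular value of $\nabla\tilde f$. Sard's theorem guarantees that the set of regular values is dense, so arbitrarily small $a$ make $\tilde f_a$ Morse on $U$. To globalize, I would choose a locally finite cover of $M$ by coordinate charts with a subordinate partition of unity $\{\rho_i\}$ of class $C^r$ and inductively construct perturbations of the form $f-\sum_i \rho_i\, \ell_{a_i}\circ\phi_i$, picking each $a_i$ small enough to (i) achieve the Morse property on a growing exhaustion by compact sets, and (ii) preserve the Morse property already obtained on previous compacta by invoking the openness just established.

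The main obstacle is precisely this patching step: a local perturbation in chart $i$ may shift critical points lying in the overlap with chart $j$, so one cannot choose the $a_i$ independently. The resolution is to order the charts, use the openness proved in the first step to quantify how small each subsequent $a_i$ must be so that no previously fixed Morse region loses its nondegeneracy, and to restrict to a locally finite cover so that only finitely many charts influence any given compact subset. Density on each compact set in the strong $C^r$ topology then follows, completing the proof.
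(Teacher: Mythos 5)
The paper does not actually prove this statement; it simply cites Hirsch, \emph{Differential Topology}, Theorem 1.2, page 147. Your argument is the standard textbook proof of that cited result (openness via the inverse/implicit function theorem and compactness in the strong $C^2$ topology; density via Sard's theorem applied to $\nabla\tilde f$ in charts, globalized with a partition of unity and the already-established openness), and it is correct as a sketch. The only point worth making explicit in a full write-up is that on the collar where $0<\rho_i<1$ the perturbation is no longer linear, so the Sard argument only certifies the Morse property on the inner compact set where $\rho_i\equiv 1$; your "growing exhaustion by compact sets" handles this provided those inner sets are chosen to cover $M$.
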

Let $f:M\to\mathbb{R}$ be a Morse function and $\bar{x}\in M$ be a critical point of $f$, and take $U=B(\bar{x},\delta)\subset\mathcal{U}_{\bar{x}}$ such that it does not contain another critical point. Using the Taylor formula for $f$ and \mbox{grad}$f$ and taking into account \eqref{normal}, we obtain, for $x\in U$ 
\[
f(x)-f(\bar{x})=\frac{1}{2}\langle \mbox{Hess}\,f(\bar{x})\exp^{-1}_{\bar{x}}x,\exp^{-1}_{\bar{x}}x\rangle+o(d^2(x,\bar{x})),
\]
\[
\mbox{grad}f(x)=\mbox{Hess}\,f(\bar{x})\exp^{-1}_{\bar{x}}x+o(d(x,\bar{x})),
\]
Reducing the size of the radius $\delta$, if necessary, we can ensure the existence of positive constants $\delta_1,\delta_2$ such that
\[
|f(x)-f(\bar{x})|\leq\delta_{1}d^2(x,\bar{x}) \quad \mbox{and}\quad \delta_{2}d(x,\bar{x})\leq\|\mbox{grad}f(x)\|.
\]
From the last two inequalities, it is easy to verify that \eqref{eq:kur100} holds with $\varphi(s)=2\sqrt{\delta_{1}s}/\delta_{2}$, $U=B(\bar{x},\delta)$ and $\eta=\delta$. Therefore, it follows from Lemma~\ref{Lem3.2} that the Morse functions are KL functions. 
\begin{remark}
It should be pointed that the last examples, amongst other things, also have appeared in \cite{Attouch2010-2} in the Euclidean context. For examples illustrating failure of this property see, for instance, \cite{Absil2005, Bolte2006, Bolte2007}.
\end{remark}

Next we recall some definitions which refer to o-minimal structures on $(\mathbb{R} ,+, \cdot )$, following the notations of Bolte et al.\cite{Bolte2007}.
\begin{definition}\label{def5}
Let $\mathcal{O} =\{\mathcal{O}_n\}_{n\in\mathbb{N}}$ be a sequence such that each $\mathcal{O}_n$ is a collection of subsets of $\mathbb{R}^n$. $\mathcal{O}$ is said to be an o-minimal structure on the real field $(\mathbb{R},+,\cdot)$ if, for each $n\in\mathbb{N}$:
\begin{itemize}
\item[(i)] $\mathcal{O}_n$ is a Boolean Algebra;
\item[(ii)] If $A\in \mathcal{O}_n$, then $A\times\mathbb{R}\in \mathcal{O}_{n+1}$ and $\mathbb{R}\times A \in \mathcal{O}_{n+1}$;
\item[(iii)] If $A \in\mathcal{O} _{n+1}$, then $\pi_{n}(A)\in\mathcal{O}_n$, where $\pi _n : \mathbb{R}^{n+1}\to\mathbb{R}^n$ is the projection on the first $n$ coordinates;
\item[(iv)] $\mathcal{O}_n$ contains the family of algebraic subsets of $\mathbb{R}^n$; 
\item[(v)] $\mathcal{O}_1$ consists of all finite unions of points and open intervals.
\end{itemize}
\end{definition}

The elements of $\mathcal{O}$ are said to be {\it definable} in $\mathcal{O}$. A function $f:\mathbb{R}^n\to \mathbb{R}$ is said to be {\it definable} in $\mathcal{O}$ if its graph belongs to $\mathcal{O}_{n+1}$. Moreover, according to Coste~\cite{Coste2000} $f:\mathbb{R}^n\to\mathbb{R}\cup\{+\infty\}$ is said to be definable in $\mathcal{O}$ if the inverse images of $f^{-1}(+\infty)$ is definable subset of $\mathbb{R}^n$ and the restriction of $f$ to $f^{-1}(\mathbb{R})$ is a definable function with values in $\mathbb{R}$.
It is worth noting that an o-minimal structure on the real field $(\mathbb{R},+,\cdot)$ is a generalization of a semialgebraic set on $\mathbb{R}^n$, i.e., a set that can be written as a finite union of sets of the form
\[
\{x\in\mathbb{R}^n: p_i(x)=0,\; q_i(x)<0,\; i=1,\ldots,r\},
\] 
with $p_i, q_i$, $i=1,\ldots, r$, being real polynomial functions. Bolte et al. \cite{Bolte2007}, presented a nonsmooth extension of the Kurdyka-Lojasiewicz inequality for definable functions, but in the case that the function $\varphi$, which appears in Definition~\ref{dkl}, is not necessarily concave. Attouch et al. \cite{Attouch2010-2}, reconsidered the said extension by noting that $\varphi$ may be taken concave. For an extensive list of examples of definable sets and functions on an o-minimal structure and properties see, for example, \cite{Dries1996, Attouch2010-2}, and references therein. We limit ourselves to presenting just the material needed for our purposes.

%%%%%%%%%%%%%%%%%%%%%%%%%%%
The first elementary class of examples of definable sets is given by the semi-algebraic sets, which we denote by $\mathbb{R}_{alg}$. An other class of examples, which we denoted by $\mathbb{R}_{an}$, is given by restricted analytic functions, i.e., the smallest structure
containing the graphs of all $f\mid_{[0,1]^n}$ analytic, where $f:\mathbb{R}^n\to\mathbb{R}$ is an arbitrary function that vanishes identically off $[0,1]^n$.
%%%%%%%%%%%%%%%%%%%%%%%%%%%

Fulfilling the same role as the semi-algebraic sets on X, on analytic manifolds we have the semi-analytic and sub-analytic sets which we define below, see Bierstone and Milman\cite{Milman1988}, Dries \cite{Dries2000}:

{\it A subset of an analytic manifold is said to be semi-analytic if it is locally described by a finite number of analytic equations and inequalities, while the sub-analytic ones are local projections of relatively compact semi-analytic sets.}

A generalization of semi-analytic and sub-analytic sets, analogous to what was given to semi-algebraic sets in terms of the o-minimal structure, leads us to the analytic-geometric categories which we define below:
\begin{definition}\label{def6}
An analytic-geometric category $\mathcal{C}$ assigns to each real analytic
manifold $M$ a collection of sets $\mathcal{C}(M)$ such that for all real analytic manifolds
$M,\ N$ the following conditions hold:
\begin{itemize}
\item[(i)] $\mathcal{C}(M)$ is a Boolean Algebra of subsets of $M$, with $M\in\mathcal{C}(M)$;
\item[(ii)] If $A\in \mathcal{C}(M)$, then $A\times \mathbb{R} \in \mathcal{C}(A\times \mathbb{R})$;
\item[(iii)] If $f: M \rightarrow N$ is a proper analytic map and $A\in \mathcal{C}(M)$, then $f(A) \in  \mathcal{C}(N)$;
\item[(iv)] If $A\subset M$ and $\{U_i\mid i\in \Lambda \}$ is an open covering of $M$, then $A\in \mathcal{C}(M)$ if and only if $A\cap U_i \in \mathcal{C}(U_i)$, for all $i \in \Lambda$;
\item[(v)] Every bounded set $A \in \mathcal{C}(\mathbb{R})$ has finite boundary, i.e. the topological boundary, $\partial A$, consists of a finite number of points.
\end{itemize}
\end{definition}
The elements of $\mathcal{C}(M)$ are called $\mathcal{C}$-sets. If the graph of a continuous function $f:A \rightarrow B$ with $ A \in \mathcal{C}(M), B \in \mathcal{C}(N)$ is contained in $\mathcal{C}(M \times N)$, then $f$ is called a $\mathcal{C}$-function. All subanalytic subsets and continuous subanalytic map of a manifold are $\mathcal{C}$-sets and $\mathcal{C}$-functions  respectively, in that manifold. We denoted this collection by $\mathcal{C}_{an}$ which represents the 'smallest' analytic-geometric category.

The next theorem provides a biunivocal correspondence between o-minimal structures containing $\mathbb{R}_{an}$ and an analytic-geometric category.
\begin{theorem}\label{teo3}
For any analytic-geometric category $\mathcal{C}$ there is an o-minimal structure $\mathcal{O(C)}$ and for any o-minimal structure $\mathcal{O}$ on $\mathbb{R}_{an}$ there is an analytic geometric category $\mathcal{C(O)}$, such that
\begin{itemize}
\item[(i)] $A\in \mathcal{C(O)}$ if for all $x\in M$ exists an analytic chart $\phi: U \rightarrow \mathbb{R}^n$ , $x\in U$, which maps $A\cap U$ onto a set definable in $\mathcal{O}$.
\item[(ii)] $A \in \mathcal{O(C)}$ if it is mapped onto a bounded $\mathcal{C}$-set in Euclidean space by a semialgebraic bijection.
\end{itemize}
Furthermore, for $\mathcal{C=C(O)}$ we get back the o-minimal structure $\mathcal{O}$ by this correspondence, and for $\mathcal{O=O(C)}$ we again get $\mathcal{C}$.
\end{theorem}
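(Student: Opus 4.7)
The plan is to establish the bijection in three logical stages: construct both maps $\mathcal{C}\mapsto\mathcal{O(C)}$ and $\mathcal{O}\mapsto\mathcal{C(O)}$ in such a way that the defining clauses (i) and (ii) actually produce, respectively, an analytic-geometric category and an o-minimal structure containing $\mathbb{R}_{an}$; verify the axioms of Definitions~\ref{def5} and \ref{def6} for each construction; and finally check that the two constructions invert each other. The whole proof rests on two observations that should be isolated at the very beginning: first, that any analytic-geometric category is preserved under analytic diffeomorphisms (which follows from Definition~\ref{def6}(iii) applied to a chart and its inverse, both of which are proper on relatively compact pieces); second, that $\mathbb{R}_{an}\subset\mathcal{O}$ implies every analytic transition map between charts induces a definable change of coordinates on bounded domains, so the "definable in a chart" condition in (i) does not depend on the chart chosen.

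For the direction $\mathcal{O}\rightsquigarrow \mathcal{C(O)}$, I would declare $A\subset M$ to be in $\mathcal{C(O)}(M)$ exactly when, for every $x\in M$, there is an analytic chart $(\phi,U)$ around $x$ such that $\phi(A\cap U)$ is definable in $\mathcal{O}$. Chart-invariance (the second observation above) makes this condition coherent. The Boolean-algebra axiom (i) of Definition~\ref{def6} is immediate from the fact that each $\mathcal{O}_n$ is Boolean. Axiom (ii), stability under $A\mapsto A\times\mathbb{R}$, comes from Definition~\ref{def5}(ii) together with the obvious fact that $(\phi,\mathrm{id})$ is a chart on $U\times\mathbb{R}$. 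Axiom (iii), invariance under proper analytic maps, is the most delicate: I would cover the source with charts, use Definition~\ref{def5}(iii) (stability under projection in $\mathcal{O}$) together with the graph of the analytic map (which is locally definable by (i) in the target) to express $f(A)$ locally as a projection of a definable set in $\mathcal{O}$. Axiom (iv) is local in nature and immediate. Axiom (v) on boundaries follows because any bounded definable subset of $\mathbb{R}$ has finite boundary by Definition~\ref{def5}(v).

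For the direction $\mathcal{C}\rightsquigarrow\mathcal{O(C)}$, define $\mathcal{O(C)}_n$ to be the collection of subsets of $\mathbb{R}^n$ that admit a semialgebraic bijection onto a bounded $\mathcal{C}$-subset of some Euclidean space. Boolean-algebra closure is inherited from $\mathcal{C}$ after intersecting images with a common bounded chart; stability under $A\mapsto A\times\mathbb{R}$ and $A\mapsto \mathbb{R}\times A$ reduces to the same statement inside a bounded piece combined with the semialgebraic bijection $\mathbb{R}\approx(-1,1)$. The projection axiom (iii) of Definition~\ref{def5} is the main obstacle: a projection is not proper, so to invoke Definition~\ref{def6}(iii) one must first restrict to a relatively compact piece, apply the proper-map axiom there, and then exhaust $\mathbb{R}^n$ by a countable family of such pieces glued by semialgebraic bijections. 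Axiom (iv), containment of algebraic sets, uses that all semialgebraic subsets of $\mathbb{R}^n$ lie in $\mathcal{C}_{an}\subseteq\mathcal{C}$; axiom (v) for $\mathcal{O(C)}_1$ reduces to the finite-boundary axiom of Definition~\ref{def6}(v). To see that $\mathbb{R}_{an}\subseteq\mathcal{O(C)}$, note that every restricted analytic function has graph inside $[0,1]^{n+1}$, which is a bounded $\mathcal{C}_{an}$-set.

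Finally, to check the two correspondences are mutually inverse, I would argue as follows. Starting from $\mathcal{O}$ and forming $\mathcal{C(O)}$ then $\mathcal{O(C(O))}$, every bounded set definable in $\mathcal{O}$ belongs to $\mathcal{C(O)}(\mathbb{R}^n)$ via the identity chart, so it lies in $\mathcal{O(C(O))}_n$; conversely, any element of $\mathcal{O(C(O))}_n$ pulls back along a semialgebraic bijection—which is in $\mathcal{O}$—to an $\mathcal{O}$-definable set. Unboundedness is recovered by the standard semialgebraic diffeomorphism with a bounded set. Starting from $\mathcal{C}$ and forming $\mathcal{O(C)}$ then $\mathcal{C(O(C))}$, locality of both categories together with the chart-invariance observation from the first paragraph shows membership in either is characterized by the same local condition on chart images. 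The hard point of this last step, and indeed the single most delicate ingredient of the whole proof, is the projection/proper-map interplay: one must show that the "bounded plus semialgebraic" reformulation on the o-minimal side captures exactly the same locally-chartable sets as the "proper analytic image" axiom on the category side, and this reduction is what forces the hypothesis $\mathbb{R}_{an}\subseteq\mathcal{O}$ throughout.
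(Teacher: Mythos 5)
The paper does not actually prove this statement: its ``proof'' is a citation to van den Dries and Miller \cite{Dries1996} and to \cite[Theorem 1.1.3]{Lageman2007}. Your attempt to reconstruct the argument is therefore necessarily a different route, and your overall architecture --- define both assignments, verify the axioms of Definitions~\ref{def5} and~\ref{def6}, then check mutual inversion --- is the right skeleton, as are the two preliminary observations (chart-independence of clause (i) via $\mathbb{R}_{an}\subseteq\mathcal{O}$, and invariance of $\mathcal{C}$ under analytic diffeomorphisms on relatively compact pieces).

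There is, however, a genuine gap in the step you yourself single out as the main obstacle: closure of $\mathcal{O(C)}$ under the projection $\pi_n\colon\mathbb{R}^{n+1}\to\mathbb{R}^n$ (Definition~\ref{def5}(iii)). Your plan to ``restrict to a relatively compact piece, apply the proper-map axiom there, and then exhaust $\mathbb{R}^n$ by a countable family of such pieces glued by semialgebraic bijections'' cannot work: neither $\mathcal{O}_n$ nor $\mathcal{C}(M)$ is closed under countable unions, so no exhaustion argument yields membership of $\pi_n(A)$. The standard device is to compactify the \emph{fiber} rather than exhaust the base: after applying the semialgebraic bijection one may assume $A$ is a bounded $\mathcal{C}$-subset of $\mathbb{R}^n\times(-1,1)$; embed $(-1,1)$ analytically into a compact manifold such as $S^1$, observe that the projection $\mathbb{R}^n\times S^1\to\mathbb{R}^n$ \emph{is} proper, and only then invoke Definition~\ref{def6}(iii). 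A related unaddressed point occurs in your Boolean-algebra step for $\mathcal{O(C)}_n$: two sets may be witnessed by semialgebraic bijections into different Euclidean spaces, and comparing them requires knowing that semialgebraic bijections carry bounded $\mathcal{C}$-sets to $\mathcal{C}$-sets --- which is not an instance of axiom (iii), since semialgebraic maps need not be analytic or proper, and again needs the graph-plus-compactification trick. Until these points are repaired the verification of Definition~\ref{def5} for $\mathcal{O(C)}$ is incomplete; the rest of the sketch is plausible in outline, but this is a substantial theorem of van den Dries--Miller, and deferring to \cite{Dries1996} as the paper does is the honest course.
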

\begin{proof}
See \cite{Dries1996} and \cite[Theorem 1.1.3]{Lageman2007}.
\end{proof}
As a consequence of the correspondence between o-minimal structures containing $\mathbb{R}_{an}$ and analytic-geometric categories, the definable sets associated allow us to provide examples of $\mathcal{C}$-sets in $\mathcal{C(O)}$. Furthermore, C-functions are locally mapped to definable functions by analytic charts.
\begin{proposition}\label{prop2} 
Let $f:M\to\mathbb{R}$ be a $\mathcal{C}$-function and $\phi: U\rightarrow \mathbb{R}^n$, $U\subset M$ an analytic local chart. Assume that $U\subset\mbox{dom}f$ and $V\subset M$ is a bounded open set such that $\overline{V}\subset U$. If $f$ restricted to $U$ is a bounded $\mathcal{C}$-function, then 
\[
f\circ \phi^{-1}: \phi(V)\to\mathbb{R},
\]
is definable in $\mathcal{O(C)}$.
\end{proposition}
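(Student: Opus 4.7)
The plan is to exhibit the graph of $f\circ\phi^{-1}|_{\phi(V)}$ as the image of a bounded $\mathcal{C}$-set under an analytic change of coordinates, and then invoke Theorem~\ref{teo3} to conclude definability in $\mathcal{O(C)}$. First, since $f|_U$ is a bounded $\mathcal{C}$-function, its graph
\[
G := \{(x,f(x)) : x\in U\}
\]
is a $\mathcal{C}$-subset of $M\times\mathbb{R}$ contained in $U\times[-C,C]$ for some $C>0$. Since $V\subset M$ is bounded with $\overline{V}\subset U$, Hopf-Rinow implies $\overline{V}$ is compact, so $\overline{V}\times[-C,C]$ is a compact subset of $U\times\mathbb{R}$.

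Next, I consider the analytic diffeomorphism $\Phi := \phi\times\mathrm{id}_\mathbb{R}\colon U\times\mathbb{R}\to\phi(U)\times\mathbb{R}\subset\mathbb{R}^{n+1}$, which is an analytic chart on the analytic manifold $M\times\mathbb{R}$ and sends the compact set $\overline{V}\times[-C,C]$ onto the bounded set $\phi(\overline{V})\times[-C,C]$. Applying Theorem~\ref{teo3}, and in particular the characterization in item~(i) to the $\mathcal{C}$-set $G$, each point of $\overline{V}\times[-C,C]$ admits some analytic chart in which the image of $G$ is definable in $\mathcal{O(C)}$. Transition maps between such charts and $\Phi$ on relatively compact regions are restricted analytic, hence themselves definable in $\mathbb{R}_{an}\subset\mathcal{O(C)}$, so the image of $G$ under $\Phi$ is also locally definable. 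Covering the compact set $\overline{V}\times[-C,C]$ by finitely many chart neighborhoods and appealing to the Boolean algebra axiom~(i) of Definition~\ref{def6}, one concludes that
\[
\Phi\bigl(G\cap(V\times\mathbb{R})\bigr) = \{(\phi(x),f(x)) : x\in V\}
\]
is definable in $\mathcal{O(C)}$; but this set is precisely the graph of $f\circ\phi^{-1}|_{\phi(V)}$.

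The main technical obstacle is the passage from the purely existential statement in Theorem~\ref{teo3}(i) — which merely guarantees some chart realizing definability of $G$ — to definability under the prescribed chart $\phi$. This is handled by the change-of-coordinates argument sketched above: analytic transition maps on relatively compact domains are themselves $\mathcal{C}$-functions, inducing coordinate changes that preserve the class of definable subsets of $\mathcal{O(C)}$. The hypotheses $\overline{V}\subset U$ and boundedness of $f|_U$ are exactly what confine the entire argument to a relatively compact region, making both the properness requirement (iii) of Definition~\ref{def6} and the boundedness needed for the construction of $\mathcal{O(C)}$ (via Theorem~\ref{teo3}(ii)) applicable.
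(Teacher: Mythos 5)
The paper does not actually prove Proposition~\ref{prop2}; it only cites Lageman's thesis, so your argument has to stand on its own. Its architecture is the right (and standard) one: push the graph $G$ of $f|_U$ forward through $\Phi=\phi\times\mathrm{id}_{\mathbb{R}}$, use the local characterization of $\mathcal{C}$-sets from Theorem~\ref{teo3}(i) together with the fact that restricted-analytic transition maps are definable in $\mathbb{R}_{an}\subset\mathcal{O(C)}$, and exploit compactness of $\overline{V}\times[-C,C]$ to reduce to finitely many pieces. Two small points of hygiene: the finite union you take at the end is a union of definable subsets of $\mathbb{R}^{n+1}$, so the Boolean-algebra axiom you need is Definition~\ref{def5}(i) for $\mathcal{O(C)}$, not Definition~\ref{def6}(i); and Theorem~\ref{teo3}(i) as stated is only a sufficient condition for membership in $\mathcal{C(O)}$, so your use of it in the converse direction relies on reading the correspondence as an equivalence (true in van den Dries--Miller, but worth saying).

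The genuine gap is in the last step, where you pass from the finitely many definable local images to $\Phi\bigl(G\cap(V\times\mathbb{R})\bigr)$. The union $\bigcup_i\Phi(G\cap W_i)$ is definable, but it is the graph of $f\circ\phi^{-1}$ over a set strictly larger than $\phi(V)$; to cut it down you must intersect with $\phi(V)\times\mathbb{R}$, and for that you need $\phi(V)$ itself to be definable in $\mathcal{O(C)}$, which an arbitrary bounded open set is not. Indeed the conclusion cannot hold without such a hypothesis: the projection of a definable set is definable (Definition~\ref{def5}(iii)), so definability of the graph of $f\circ\phi^{-1}|_{\phi(V)}$ forces definability of its domain $\phi(V)$, and already in $\mathbb{R}$ a bounded open set such as $\bigcup_{n\geq 1}\bigl(1/(2n+1),1/(2n)\bigr)$ is definable in no o-minimal structure by Definition~\ref{def5}(v). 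This missing hypothesis is really a defect of the statement as transcribed---Lageman assumes $\phi(V)$ definable, and in the paper's subsequent application one may simply choose $V$ so that $\phi(V)$ is a ball---but your proof should either add the assumption explicitly or explain why the particular $V$ at hand has definable image; as written, the final deduction does not follow from the pieces you assembled.
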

\begin{proof}
See \cite[Proposition 1.1.5.]{Lageman2007}.
\end{proof}

The next result provided us a nonsmooth extension of the Kurdyka-Lojasiewicz inequality for $\mathcal{C}$-functions defined on analytic manifolds.
\begin{theorem}
Let $M$ be a analytic Riemannian manifold and $f:M\to \mathbb{R}$ a continuous $\mathcal{C}$-function. Then, $f$ is a KL function.  Moreover, the function $\varphi$ which appears in \eqref{eq:kur100} is definable in $\mathcal{O}$.
\end{theorem}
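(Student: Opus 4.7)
The plan is to split on whether $\bar{x}\in\mathrm{dom}\,\partial f$ is critical. If $0\notin\partial f(\bar x)$, Lemma~\ref{Lem3.2} already produces the Kurdyka-Lojasiewicz inequality with the linear, hence definable, choice $\varphi(s)=s/\delta$, so it remains to treat the critical case $0\in\partial f(\bar x)$.

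In that case, I fix an analytic chart $\phi\colon U\to\mathbb{R}^n$ around $\bar x$ and, shrinking $U$ if necessary so that $\overline U$ is compact and $f|_U$ is bounded (which is possible by continuity of $f$), pick a bounded open $V$ with $\bar x\in V$ and $\overline{V}\subset U$. Proposition~\ref{prop2} then ensures that $g:=f\circ\phi^{-1}\colon\phi(V)\to\mathbb{R}$ is continuous and definable in $\mathcal{O}(\mathcal{C})$. Invoking the nonsmooth Euclidean Kurdyka-Lojasiewicz theorem for continuous definable functions of Bolte et al.~\cite{Bolte2007} in the concave refinement of Attouch et al.~\cite{Attouch2010-2}, one obtains $\eta_0>0$, a neighborhood $W_0\subset\phi(V)$ of $\phi(\bar x)$ and a continuous concave definable function $\tilde{\varphi}\colon[0,\eta_0)\to\mathbb{R}_+$, of class $C^1$ on $(0,\eta_0)$ with $\tilde{\varphi}(0)=0$ and $\tilde{\varphi}{}'>0$, such that
\[
\tilde{\varphi}{}'\bigl(g(y)-g(\phi(\bar x))\bigr)\,\mathrm{dist}\bigl(0,\partial g(y)\bigr)\ge 1
\]
for every $y\in W_0\cap[g(\phi(\bar x))<g<g(\phi(\bar x))+\eta_0]$.

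To transfer this to $M$, I use Proposition~\ref{regracadeiaZhu}, which yields $\partial f(x)=\phi^*_x\,\partial g(\phi(x))$ for $x\in U$. Since $x\mapsto\phi^*_x$ is continuous and $\overline{V}$ is compact, there exists $c_1>0$ with $c_1\|v\|\le\|\phi^*_x v\|$ for all $x\in\overline V$ and $v\in\mathbb{R}^n$; taking infima yields $c_1\,\mathrm{dist}(0,\partial g(\phi(x)))\le\mathrm{dist}(0,\partial f(x))$ on $\overline V$. Setting $\varphi:=\tilde{\varphi}/c_1$ and $U':=\phi^{-1}(W_0)\cap V$, the function $\varphi$ inherits continuity, concavity, $C^1$-smoothness on $(0,\eta_0)$, strict monotonicity, vanishing at $0$, and definability in $\mathcal{O}$ (scaling by a definable constant preserves definability), and for every $x\in U'\cap[f(\bar x)<f<f(\bar x)+\eta_0]$ one gets
\[
\varphi'\bigl(f(x)-f(\bar x)\bigr)\,\mathrm{dist}\bigl(0,\partial f(x)\bigr)\ge\tilde{\varphi}{}'\bigl(g(\phi(x))-g(\phi(\bar x))\bigr)\,\mathrm{dist}\bigl(0,\partial g(\phi(x))\bigr)\ge 1,
\]
which is exactly the Kurdyka-Lojasiewicz inequality~\eqref{eq:kur100} at $\bar x$.

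The main obstacle is to invoke cleanly the nonsmooth Euclidean Kurdyka-Lojasiewicz inequality in the form needed (concave, definable $\varphi$, usable for the limiting subdifferential) and to check that the Euclidean limiting subdifferential of $g$ appearing in \cite{Bolte2007,Attouch2010-2} is precisely the one obtained from $\partial f$ through Proposition~\ref{regracadeiaZhu}; once these identifications are in place, the uniform norm comparison on $\overline V$ and the rescaling $\varphi:=\tilde{\varphi}/c_1$ complete the argument mechanically.
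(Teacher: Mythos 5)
Your proposal is correct and follows essentially the same route as the paper: pass to an analytic chart, use Proposition~\ref{prop2} to get a definable Euclidean representative, invoke the nonsmooth Kurdyka--Lojasiewicz theorem of Bolte et al.\ in the concave form of Attouch et al., and transfer back via the chain rule of Proposition~\ref{regracadeiaZhu} together with a uniform bound on $(\phi^*_x)^{-1}$ obtained by compactness, rescaling $\varphi$ by the resulting constant. The only cosmetic difference is that you dispatch the noncritical case via Lemma~\ref{Lem3.2} at the outset rather than at the end, and you arrange the compact closure of the subneighborhood from the start instead of introducing the auxiliary set $V'$ afterwards.
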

\begin{proof}
Take $\bar{x}\in M$ a critical point of $f$ and let $\phi:V\to\mathbb{R}^n$ be an analytic local chart with $V\subset M$ a neighbourhood of $\bar{x}$ chosen such that $V$ and $f(V)$ are bounded. Thus, from Proposition~\ref{prop2}, we have that $h=f\circ \phi^{-1}:\phi(V)\to\mathbb{R}$ is a definable function in $\mathcal{O(C)}$. Thus, as $\phi(V)$ is a bounded open definable set containing $\bar{y}=\phi(\bar{x})$ and $\phi$ is definable, applying Theorem 11 of \cite{Bolte2007} with $U=\phi(V)$ and taking into account Theorem 4.1 of \cite{Attouch2010-2}, Kurdyka-Lojasiewicz inequality holds at $\bar{y}=\phi(\bar{x})$, i.e., there exists $\eta \in (0,+\infty]$ and a continuous concave function $\Phi:[0,\eta)\to \mathbb{R}_+$ such that:
\begin{itemize}
\item [(i)] $\Phi(0)=0$, $\Phi\in C^1(0,\eta)$ and, for all $s\in(0,\eta)$, $\Phi '(s)>0;$
\item [(ii)] for all $y\in U\cap [h(\bar{y})<h<h(\bar{y})+\eta]$, it holds
\[
\Phi '(h(y)-h(\bar{y}))dist(0, \partial h(y))\geq 1.
\]
\end{itemize}
Since $\phi$ is a diffeomorphism and using that $y=\phi(x)$, $\bar{y}=\phi(\bar{x})$ and $h=f\circ\phi^{-1}$, from Proposition~\ref{regracadeiaZhu} last inequality yields 
\[
\Phi '(f(x)-f(\bar{x}))dist(0, (\phi^*_x)^{-1}\partial f(x))\geq 1,\quad x\in V\cap [0<f<f(\bar{x})+\eta],
\]
where $\phi^*_x$ denote the Fréchet derivative adjunct of the function $\phi$.

%%%%%%%%%%%%%%%%%%%%%%%%%
%%%%%%%%%%%%%%%%%%%%%%%%%
Take $V'\subset V$ an open set such that $K=\overline{V'}$ is contained in the interior of the set V and $\bar{x}\in V'$.
Thus, $K$ is a compact set and for each $x\in K$ there exists $c_x>0$ with
\[
\|(\phi^*_x)^{-1}w\|\leq c_x\|w\|,\qquad w\in T_xM.
\]
Since $K$ is a compact set and $(\phi^*_x)^{-1}$ is a diffeomorphism, there exists a positive constant $c:=\sup\{c_x: x\in K\}$ such that
\[
\|(\phi^*_x)^{-1}w\|\leq c\|w\|,\qquad w\in T_xM,\;  x\in K.
\]
Hence, for $x\in V' \cap [0<f<f(\bar{x})+\eta]$, we have
\[
1\leq \Phi'(f(x)-f(\bar{x}))dist(0, (\phi^*_x)^{-1}\partial f(x))\leq c\,\Phi '(f(x)-f(\bar{x}))dist(0, \partial f(x)),
\]
and the Kurdyka-Lojasiewicz inequality holds at $\bar{x}$ with $\varphi=c\,\Phi$. Therefore, combining arbitrary of $\bar{x}$ with Lemma~\ref{Lem3.2} we conclude that $f$ is a KL function. The second part also follows from Theorem 11 of \cite{Bolte2007} and the proof is concluded.

\end{proof}

The following result provided us a nonsmooth extension of the Kurdyka-Lojasiewicz inequality for definable functions defined on submanifolds of Euclidean space. Coste \cite{Coste2000} devotes Chapter $6$ to establish properties of such submanifolds.

\begin{theorem}
Let $f:M\subset\mathbb{R}^n\to\mathbb{R}\cup\{+\infty\}$ be proper lower semicontinuous definable function in an o-minimal structure $\mathcal{O}$. If $M$ is endowed with the induced metric of Euclidean space, then $f$ is a KL function. Moreover, the function $\varphi$ which appears in \eqref{eq:kur100} is definable in $\mathcal{O}$.
\end{theorem}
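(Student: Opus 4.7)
The proof will closely parallel the preceding theorem, the essential difference being that the analytic local chart is replaced by a definable $C^1$ local parametrization of the submanifold $M \subset \mathbb{R}^n$. By Lemma~\ref{Lem3.2} it suffices to verify the Kurdyka-Lojasiewicz inequality at every critical point of $f$, so fix $\bar{x} \in \mbox{dom}\,\partial f$ with $0 \in \partial f(\bar{x})$. Since $f$ is definable in $\mathcal{O}$, its domain $M$ is a definable subset of $\mathbb{R}^n$; invoking Chapter~6 of Coste~\cite{Coste2000} (definable cell decomposition together with the implicit function theorem applied inside the structure) we select a definable $C^1$-diffeomorphism $\phi\colon V\to\phi(V)\subset \mathbb{R}^d$, with $V\subset M$ a bounded open neighbourhood of $\bar{x}$ and $d=\dim M$, in such a way that $\phi(V)$ is also bounded. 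Then $h:=f\circ\phi^{-1}\colon\phi(V)\to\mathbb{R}\cup\{+\infty\}$ is proper, lower semicontinuous, and definable in $\mathcal{O}$ since the class of definable functions is closed under composition with definable maps.

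Next, apply the nonsmooth Euclidean KL inequality (Theorem~11 of Bolte et al.~\cite{Bolte2007} combined with Theorem~4.1 of Attouch et al.~\cite{Attouch2010-2}) to $h$ at $\bar{y}=\phi(\bar{x})$. This yields $\eta\in(0,+\infty]$ and a continuous concave function $\Phi\colon[0,\eta)\to\mathbb{R}_+$, definable in $\mathcal{O}$, with $\Phi(0)=0$, $\Phi\in C^1(0,\eta)$ and $\Phi'>0$ on $(0,\eta)$, such that
\[
\Phi'(h(y)-h(\bar{y}))\,dist(0,\partial h(y))\geq 1,\qquad y\in\phi(V)\cap[h(\bar{y})<h<h(\bar{y})+\eta].
\]
Now use Proposition~\ref{regracadeiaZhu} to push this back to $f$: writing $y=\phi(x)$, we have $\partial f(x)=\phi^*_x\,\partial h(\phi(x))$, so
\[
\Phi'(f(x)-f(\bar{x}))\,dist\!\bigl(0,(\phi^*_x)^{-1}\partial f(x)\bigr)\geq 1,\qquad x\in V\cap[f(\bar{x})<f<f(\bar{x})+\eta].
\]

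Finally, shrink $V$ to an open $V'\ni\bar{x}$ with $\overline{V'}\subset V$ compact; continuity of $x\mapsto\|(\phi^*_x)^{-1}\|$ on $\overline{V'}$ gives a constant $c>0$ with $\|(\phi^*_x)^{-1}w\|\leq c\|w\|$ for every $w\in T_xM$ and $x\in\overline{V'}$. Combined with the previous inequality, this yields the KL inequality at $\bar{x}$ with $\varphi=c\,\Phi$, which remains definable in $\mathcal{O}$ (scaling by a constant preserves definability). Together with Lemma~\ref{Lem3.2}, this establishes that $f$ is a KL function. The delicate point of the argument is the existence of a \emph{definable} $C^1$-parametrization of $M$ around $\bar{x}$: one must verify that the implicit function theorem applied along a cell decomposition of $M$ produces a chart whose graph lies in $\mathcal{O}$, and this is precisely where the submanifold hypothesis on $M$ (rather than an arbitrary definable set) is used.
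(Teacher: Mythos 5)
Your argument is correct, but it takes a genuinely different route from the paper's. The paper's proof is much shorter and uses no charts at all: it exploits the hypothesis that $M$ carries the metric induced from $\mathbb{R}^n$ to work directly in the ambient space, choosing a bounded definable set $W\subset\mathbb{R}^n$ containing $\bar{x}$, observing that $\mbox{dom}f\cap W$ is a bounded definable subset of $\mathbb{R}^n$, and applying Theorem 11 of \cite{Bolte2007} (with the concavity upgrade from Theorem 4.1 of \cite{Attouch2010-2}) with $U=\mbox{dom}f\cap W$; the intrinsic subdifferential data are identified with the Euclidean ones via the induced metric, and Lemma~\ref{Lem3.2} handles noncritical points. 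Your proof instead replays the mechanism of the preceding theorem: a definable $C^1$ chart $\phi$, transfer of the Euclidean KL inequality for $h=f\circ\phi^{-1}$ back to $f$ via the chain rule of Proposition~\ref{regracadeiaZhu}, and a compactness bound on $\|(\phi^*_x)^{-1}\|$ to absorb the distortion into the constant $c$ multiplying $\Phi$. What your route buys is robustness: it does not rely on identifying the Riemannian subdifferential on $M$ with the ambient Euclidean one (a step the paper leaves implicit), and it would survive replacing the induced metric by any metric comparable to the chart-induced one on compacta. What it costs is the extra input of a \emph{definable} chart; this is genuine but not as delicate as you suggest, since for a definable $C^1$ submanifold one can take $\phi$ to be a linear coordinate projection onto a $d$-dimensional coordinate subspace transverse to the normal space at $\bar{x}$ (definable automatically, a local diffeomorphism by the definable implicit function theorem, cf.\ Chapter 6 of \cite{Coste2000}), so no cell decomposition is really needed. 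Both proofs conclude identically via Lemma~\ref{Lem3.2} and the definability of $\varphi$ supplied by Theorem 11 of \cite{Bolte2007}.
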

\begin{proof}
Take $\bar{x}\in M$ a critical point of $f$ and $W$ a bounded definable subset of $\mathbb{R}^n$ such that $\bar{x}\in W$. Since $\mbox{dom}f$ and $W$ are definable sets in $\mathbb{R}^n$ and $W$ is bounded, it follows that $\mbox{dom}f\cap W$ is a bounded definable set in $\mathbb{R}^n$. Thus, applying Theorem 11 of \cite{Bolte2007} with $U=\mbox{dom}f\cap W$ and, taking into account Theorem 4.1 of \cite{Attouch2010-2}, the Kurdyka-Lojasiewicz inequality holds at $\bar{x}$. Therefore, combining arbitrary of $\bar{x}$ with Lemma~\ref{Lem3.2}, we conclude the first part of the theorem. The second part also follows, of Theorem 11 of \cite{Bolte2007} and the proof is concluded.
\end{proof}
\begin{remark}
A large class of examples of definable submanifolds of Euclidean space are given by manifolds which are obtained as reverse image of regular value of a definable function, more precisely, if $F:\mathbb{R}^{n+k}\to \mathbb{R}^{k}$ is a $C^p$ definable function and $"0"$ is a regular value of $F$, then $M=F^{-1}(0)$ is a definable submanifold of $\mathbb{R}^n$. Moreover, via the Nash Theorem (\cite{Nash1956}), we can isometrically imbed in some $\mathbb{R}^n$ a small piece $\mathcal{Y}$ of $M$, which is a regular submanifold of $\mathbb{R}^n$. Indeed, if $\epsilon>0$ is small enough, then the set of normal segments of radius $\epsilon$ centered at points of $\mathcal{Y}$ determine a tubular neighbourhood $\mathcal{V}$ of $\mathcal{Y}$. Clearly, $\mathcal{V}$ has a natural coordinate system given by $y=(x, t)\in\mathcal{Y}\times B_{\epsilon}(0)$, 
where $B_{\epsilon}(0)\subset\mathbb{R}^m$ is an $\epsilon$-ball (here, $n-m,\; m < n$, is the dimension of $M$). We identify $(x, 0)$ with $x$. Define $h(x,t)=t$. It is obvious that $h$ is a definable function and $\mathcal{Y}=\{y\in V; h(y)=0\}$ is a definable submanifold of $\mathbb{R}^n$.
\end{remark}

\section {An abstract convergence result for inexact descent methods}\label{sec5}

In this section we present an abstract converge analysis of inexact descent methods for functions satisfying Kurdyka-Lojasiewicz inequality at a given critical point. Throughout this section $f$ denotes a proper lower semicontinuous function $f:M\to\mathbb{R}\cup\{+\infty\}$. 

Next we present the definition of quasi distance.
\begin{definition}\label{qd}
A mapping $D: M\times M\to \mathbb{R}_+$ is said to be a quasi distance if:
\item [(i)] for all $x, y\in M, \ D(x,y)= D(y,x)=0 \Leftrightarrow x=y$;
\item[(ii)] for all $x, y, z \in M, \ D(x,z)\leq D(x,y)+D(y,z)$.
\end{definition}

If $D$ is also symmetric, that is, $D(x, y) = D(y, x)$, $x, y \in M$, then $D$ is
a distance. Given $x\in M$ and $\epsilon>0$ fixed, we denote by $B_D(x,\epsilon)$ the ball with respect to the quasi distance $D$ defined by
\[
B_D(x,\epsilon)=\{y\in M: D(y,x)<\epsilon\}.
\]

Throughout this section we assume that, for each $y\in M$, $D(.,y)$ is continuous.

Let $a$ and $b$ be fixed positive constants and $\{x^k\}$ an arbitrary sequence satisfying the following assumptions,
\begin{itemize}
\item[{\bf H1.}] For each $k\in\mathbb{N}$, 
\[
f(x^{k+1})+a D^2(x^{k+1}, x^k)\leq f(x^k);
\]
\item[{\bf H2.}] For each $k\in\mathbb{N}$, there exists $w^{k+1}\in\partial f(x^{k+1})$ such that
\[
\|w^{k+1}\|\leq bD(x^{k+1},x^k);
\]
\item[{\bf H3.}] $f$ restricted to \mbox{dom}$f$ is continuous;

\item[\bf H4.] $\sum_{k=0}^{+\infty}D(x^{k+1},x^k)<+\infty$ implies that $\{x^k\}$ is convergent on $M$. 

\end{itemize}
 \begin{remark}\label{remark1000}
From assumption {\bf H2} it is immediate that if $x^{k+1}=x^k$ for some $k$, then $x^k$ is a critical point of $f$. Note also that if $D$ is Riemannian distance then assumption {\bf H4} holds (Hopf-Rinow's theorem). However, even in the Euclidean case there exists a particular class of quasi distances that not necessary are distances but satisfy the assumption {\bf H4}, see \cite{Moreno2011}.
 \end{remark}
 
From now on, in this section, we assume that $\{x^k\}$ is a sequence satisfying assumptions {\bf H1}, {\bf H2}, {\bf H3} and {\bf H4}. Moreover, taking into account the first part of Remark~\ref{remark1000}  we assume that $x^{k+1}\neq x^k$ for all $k$.

Next, we present one technical result that could be useful in convergence analysis.
\begin{lemma}\label{lem3}
Let $\{a_k\}$ be a sequence of positive numbers such that
\[
\sum_{k=1} ^{+\infty} {a_k^2}/{a_{k-1}} <+\infty.
\]
Then, $\sum_{k=1} ^{+\infty} a_k < + \infty$.  
\end{lemma}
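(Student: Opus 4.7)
The plan is to bound the partial sums $S_N = \sum_{k=1}^{N} a_k$ uniformly in $N$ by means of a Cauchy--Schwarz trick, exploiting the algebraic identity $a_k = \sqrt{a_k^2/a_{k-1}}\,\sqrt{a_{k-1}}$.

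First I would denote $C := \sum_{k=1}^{+\infty} a_k^2/a_{k-1}$, which is finite by hypothesis. Then for each $N\geq 1$, applying the Cauchy--Schwarz inequality to the factorization above,
\begin{equation*}
S_N \;=\; \sum_{k=1}^{N} \sqrt{\tfrac{a_k^2}{a_{k-1}}}\,\sqrt{a_{k-1}} \;\leq\; \left(\sum_{k=1}^{N} \tfrac{a_k^2}{a_{k-1}}\right)^{1/2}\!\left(\sum_{k=1}^{N} a_{k-1}\right)^{1/2}\!.
\end{equation*}
Since $\sum_{k=1}^{N} a_{k-1} = a_0 + \sum_{k=1}^{N-1}a_k \leq a_0 + S_N$, this gives the one-step recursion
\begin{equation*}
S_N^2 \;\leq\; C\bigl(a_0 + S_N\bigr).
\end{equation*}

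The next step is to read off a uniform bound: the quadratic inequality $S_N^2 - C S_N - C a_0 \leq 0$ implies $S_N \leq (C + \sqrt{C^2 + 4C a_0})/2$ for every $N$. Hence the partial sums $\{S_N\}$ are bounded above by a constant independent of $N$, and since the terms $a_k$ are positive the monotone increasing sequence $\{S_N\}$ must converge, yielding $\sum_{k=1}^{+\infty} a_k < +\infty$.

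There is no real obstacle here; the only mildly delicate point is the harmless shift of index in the estimate $\sum_{k=1}^{N} a_{k-1} \leq a_0 + S_N$, which is what allows the estimate to close on itself and produce a self-bounding quadratic in $S_N$ rather than an estimate of $S_N$ in terms of a strictly larger quantity.
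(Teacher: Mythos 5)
Your proof is correct and follows essentially the same route as the paper: both apply Cauchy--Schwarz to the splitting $a_k = (a_k/\sqrt{a_{k-1}})\,\sqrt{a_{k-1}}$ and then close the estimate on itself via $\sum_{k=1}^{N} a_{k-1} \leq a_0 + S_N$. The only (cosmetic) difference is that you resolve the resulting self-bounding inequality by solving the quadratic in $S_N$, whereas the paper divides through by $\bigl(\sum_{k=1}^{N} a_{k-1}\bigr)^{1/2}$ to bound the square root of the partial sum linearly.
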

\begin{proof}
Take $j\in \mathbb{N}$ fixed. Note that,
\[
\sum_{k=1} ^j a_k =\sum_{k=1} ^j \frac{a_k}{ \sqrt{a_{k-1}}}\sqrt{a_{k-1}}\leq\left(\sum_{k=1} ^j \frac{a_k ^2}{{a_{k-1}}}\right)^{1/2}\left(\sum_{k=1} ^j {a_{k-1}}\right)^{1/2}, 
\]
where the above inequality follows from Cauchy-Schwartz inequality in $\mathbb{R}^j$ with respect to the vectors $\left(a_1/ \sqrt{a_0},\ldots,  a_j/\sqrt{a_{j-1}}\right)$ and $\left(\sqrt{a_{0}},\ldots,\sqrt{a_{j-1}}\right)$. So, 
\[
\sum_{k=1} ^j a_k\leq\left(\sum_{k=1} ^j \frac{a_k ^2}{{a_{k-1}}}\right)^{1/2}\left(\sum_{k=1}^j {a_{k-1}}\right)^{1/2}.
\]
Now, adding $a_0$ to both sides of the last inequality and taking into account that $a_j>0$, we obtain 
\[
\sum_{k=1} ^j a_{k-1}\leq a_0+\left(\sum_{k=1} ^j \frac{a_k ^2}{{a_{k-1}}}\right)^{1/2}\left(\sum_{k=1}^j {a_{k-1}}\right)^{1/2}.
\]
Therefore, dividing both sides of last inequality by $\left(\sum_{k=1}^j {a_{k-1}}\right)^{1/2}$ and observing that 
\[
a_0/\left(\sum_{k=1}^j {a_{k-1}}\right)^{1/2}\leq\sqrt{a_0}\qquad (a_k>0, k=0,1,\ldots),
\]
it follows that
\[
\left(\sum_{k=1}^j {a_{k-1}}\right)^{1/2}\leq \sqrt{a_0}+\left(\sum_{k=1} ^j \frac{a_k ^2}{{a_{k-1}}}\right)^{1/2},
\]
and the desired result follows by using simple arguments of real analysis.
\end{proof}

In the following theorem we prove the full convergence of the sequence $\{x^k\}$ to a critical point of functions which satisfy the Kurdyka-Lojasiewicz property in that point.

\begin{theorem}\label{theoconverg}
Let $U$, $\eta$ and $\varphi:[0,\eta)\to\mathbb{R}_+$ be the objects appearing in the Definition \ref{dkl}. Assume that $x^0\in\mbox{dom}f$, $\tilde{x}\in M$ is an accumulation point of the sequence $\{x^k\}$, $\rho>0$ is such that $B_D(\tilde{x},\rho)\subset U$ and $f$ satisfies the Kurdyka-Lojasiewicz inequality at $\tilde{x}$.
Then there exists $k_0\in\mathbb{N}$ such that 
\begin{equation}\label{eq:somaconver}
\sum_{k=k_0}^{+\infty}D(x^{k+1},x^{k})<+\infty.
\end{equation}
Moreover, $f(x^k)\to f(\tilde{x})$, as $k\to+\infty$,
and the sequence $\{x^k\}$ converges to $\tilde{x}$ which is a critical point of $f$.
\end{theorem}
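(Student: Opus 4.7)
The plan is to combine the descent information from \textbf{H1}--\textbf{H2} with the Kurdyka-Lojasiewicz inequality at $\tilde{x}$ in the now-standard way: first obtain a bound $\sum_k D^2(x^{k+1},x^k)/D(x^k,x^{k-1})<+\infty$ by telescoping a concavity inequality for $\varphi$, then promote it to $\sum_k D(x^{k+1},x^k)<+\infty$ via Lemma~\ref{lem3}, then use \textbf{H4} to deduce convergence of $\{x^k\}$, and finally recognize the limit as a critical point via the closedness property built into Definition~\ref{eq:sublimiting}.

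First I would note that \textbf{H1} makes $\{f(x^k)\}$ nonincreasing, so the existence of a subsequence $x^{k_j}\to\tilde{x}$ together with \textbf{H3} forces $f(x^k)\downarrow f(\tilde{x})$. Because $x^{k+1}\neq x^k$, assumption \textbf{H1} yields $f(x^k)>f(\tilde{x})$ strictly for every $k$. Since $D(\cdot,\tilde{x})$ is continuous, I can choose $k_0$ so that $0<f(x^{k_0})-f(\tilde{x})<\eta$ and
\[
 D(x^{k_0},\tilde{x}) + D(x^{k_0},x^{k_0-1}) + \tfrac{b}{a}\,\varphi\bigl(f(x^{k_0})-f(\tilde{x})\bigr)\ <\ \rho.
\]
This margin is the budget that will absorb every subsequent partial sum inside $B_D(\tilde{x},\rho)\subset U$.

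The heart of the argument is an induction on $k\geq k_0$ maintaining $x^k\in B_D(\tilde{x},\rho)$, so that the Kurdyka-Lojasiewicz inequality applies at every $x^k$. At such an iterate, \textbf{H2} gives $\mbox{dist}(0,\partial f(x^k))\leq b\,D(x^k,x^{k-1})$, whence the KL inequality forces $\varphi'(f(x^k)-f(\tilde{x}))\geq 1/(b\,D(x^k,x^{k-1}))$. Coupling this with the concavity inequality $\varphi(u)-\varphi(v)\geq \varphi'(u)(u-v)$ (with $u=f(x^k)-f(\tilde{x})$, $v=f(x^{k+1})-f(\tilde{x})$) and with \textbf{H1} produces the telescoping estimate
\[
 \varphi\bigl(f(x^k)-f(\tilde{x})\bigr)-\varphi\bigl(f(x^{k+1})-f(\tilde{x})\bigr)\ \geq\ \frac{a}{b}\cdot\frac{D^2(x^{k+1},x^k)}{D(x^k,x^{k-1})}.
\]
Summing from $k_0$ to $N$ and using $\varphi\geq 0$ bounds the partial sums of $D^2(x^{k+1},x^k)/D(x^k,x^{k-1})$ uniformly in $N$ by $(b/a)\varphi(f(x^{k_0})-f(\tilde{x}))$. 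Applying Lemma~\ref{lem3} to $a_k=D(x^{k_0+k},x^{k_0+k-1})$ upgrades this to a uniform bound on $\sum_{k\leq N} D(x^{k+1},x^k)$, and the triangle inequality for $D$ together with the choice of $k_0$ propagates $x^{k+1}\in B_D(\tilde{x},\rho)$, closing the induction and delivering \eqref{eq:somaconver}.

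Once \eqref{eq:somaconver} holds, assumption \textbf{H4} produces a limit $x^*\in M$, and since $\tilde{x}$ is an accumulation point $x^*=\tilde{x}$. Finally, \textbf{H2} supplies $w^{k+1}\in\partial f(x^{k+1})$ with $\|w^{k+1}\|\leq b\,D(x^{k+1},x^k)\to 0$, while $f(x^{k+1})\to f(\tilde{x})$ by \textbf{H3}, so the closedness of $\mbox{Graph}\,\partial f$ encoded in Definition~\ref{eq:sublimiting} yields $0\in\partial f(\tilde{x})$. The main obstacle is the coupled induction: the stay-in-ball condition, the strict descent $f(x^k)>f(\tilde{x})$ (needed to put $\varphi$ on its domain of definition), and the partial-sum bound must all be propagated simultaneously; moreover, because $D$ is only a quasi-distance (possibly asymmetric), one must iterate the triangle inequality carefully in the right order when bounding $D(x^{k+1},\tilde{x})$.
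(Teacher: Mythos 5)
Your proposal is correct and follows essentially the same route as the paper: the same choice of $k_0$ via a smallness ``budget'', the same KL--concavity--\textbf{H1}--\textbf{H2} estimate yielding $\varphi(f(x^k)-f(\tilde{x}))-\varphi(f(x^{k+1})-f(\tilde{x}))\geq (a/b)\,D^2(x^{k+1},x^k)/D(x^k,x^{k-1})$, the same stay-in-ball induction, Lemma~\ref{lem3} for summability, \textbf{H4} for convergence, and closedness of $\partial f$ for criticality. The only (harmless) difference is bookkeeping: your budget carries $D(x^{k_0},x^{k_0-1})$ where the paper uses $2\sqrt{(f(x^{k_0})-f(\tilde{x}))/a}$, and for the induction you need the \emph{quantitative} partial-sum bound from the proof of Lemma~\ref{lem3} (not just its qualitative conclusion), which forces a factor of $2$ in front of those budget terms; both quantities tend to zero along the subsequence, so the choice of $k_0$ absorbs this.
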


In order to prove the above theorem we need some preliminary results over which we assume that all assumptions of Theorem 4.1 hold, with the exception of {\bf H1}, {\bf H2}, {\bf H3} and {\bf H4}, which will be assumed to hold only when explicitly stated.

 %%%%%%%%%%%%%%%%%%%%%%%%%%%%%%%%%
\begin{lemma}\label{lemma:conver1}
Assume that assumptions {\bf H1} and {\bf H3} hold. Then there exists $k_0\in\mathbb{N}$ such that
\begin{equation}\label{eq:conv1}
f(\tilde{x})< f(x^{k})<f(\tilde{x})+\eta,\qquad k\geq k_0,
\end{equation}
\begin{equation}\label{eq:conv2}
D(x^{k_0},\tilde{x})+2\sqrt{\frac{f(x^{k_0})-f(\tilde{x})}{a}}+\frac{b}{a}\varphi(f(x^{k_0})-f(\tilde{x}))<\rho.
\end{equation}
Moreover, if {\bf H2} holds, then
\begin{equation}\label{eq:conv6}
\frac{b}{a}[\varphi(f(x^{k_0})-f(\tilde{x}))-\varphi(f(x^{k_0+1})-f(\tilde{x}))]\geq \frac{D^2(x^{k_0+1},x^{k_0})}{D(x^{k_0},x^{k_0-1})}.
\end{equation}
In particular, if $x^k\in B_D(\tilde{x},\rho)$ for all $k\geq k_0$, then  $\sum_{k=k_0}^{+\infty}D(x^{k+1},x^{k})<+\infty$ and, assuming that {\bf H4} holds, the sequence $\{x^k\}$ converges to $\tilde{x}$.
\end{lemma}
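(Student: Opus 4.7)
The plan is to build the index $k_0$ and the three inequalities in the stated order, then observe that the argument producing (4.3) is not special to $k_0$; applied at every index $\ge k_0$ it telescopes into a summable series, and the conclusion follows from Lemma 5.1 (the Cauchy--Schwarz-type lemma just proved) together with assumption H4.

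Under H1 and the standing assumption $x^{k+1}\neq x^k$, the sequence $\{f(x^k)\}$ is strictly decreasing. Since $\tilde{x}$ is an accumulation point and $f$ is continuous on $\mathrm{dom}\,f$ by H3, a subsequence $\{x^{k_j}\}$ with $f(x^{k_j})\to f(\tilde{x})$ exists; monotonicity then forces $f(x^k)\downarrow f(\tilde{x})$ with $f(x^k)>f(\tilde{x})$ for every $k$ (otherwise two consecutive iterates would coincide), which gives (4.1) once $f(x^k)-f(\tilde{x})<\eta$. Along the same subsequence, continuity of $D(\cdot,\tilde{x})$ gives $D(x^{k_j},\tilde{x})\to 0$, and $\varphi(0)=0$ with $\varphi$ continuous at $0$ (Definition 4.1(i)) makes the two remaining summands in (4.2) vanish in the limit. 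I then select $k_0$ along this subsequence large enough that (4.1) holds from $k_0$ on and (4.2) is satisfied; this automatically puts $x^{k_0}\in B_D(\tilde{x},\rho)\subset U$, so the Kurdyka--{\L}ojasiewicz inequality is available at $x^{k_0}$.

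For (4.3), write $\alpha_k=f(x^k)-f(\tilde{x})\in(0,\eta)$. By concavity of $\varphi$ on $[0,\eta)$ and $\alpha_{k_0}>\alpha_{k_0+1}$, $\varphi(\alpha_{k_0})-\varphi(\alpha_{k_0+1})\ge \varphi'(\alpha_{k_0})(\alpha_{k_0}-\alpha_{k_0+1})$, while H1 gives $\alpha_{k_0}-\alpha_{k_0+1}\ge a\,D^2(x^{k_0+1},x^{k_0})$. The KL inequality at $\tilde{x}$ applied to $x^{k_0}$ yields $\varphi'(\alpha_{k_0})\,\mathrm{dist}(0,\partial f(x^{k_0}))\ge 1$, and H2 supplies $\mathrm{dist}(0,\partial f(x^{k_0}))\le b\,D(x^{k_0},x^{k_0-1})$, so $\varphi'(\alpha_{k_0})\ge 1/[b\,D(x^{k_0},x^{k_0-1})]$. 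Chaining these inequalities and multiplying by $b/a$ produces (4.3).

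Assume now $x^k\in B_D(\tilde{x},\rho)$ for all $k\ge k_0$; the derivation of (4.3) applies verbatim with any such $k$ in place of $k_0$, giving $\tfrac{b}{a}[\varphi(\alpha_k)-\varphi(\alpha_{k+1})]\ge D^2(x^{k+1},x^k)/D(x^k,x^{k-1})$. Summing over $k$ telescopes the left-hand side into a quantity bounded by $\tfrac{b}{a}\varphi(\alpha_{k_0})$ (using $\varphi\ge 0$), so $\sum_{k\ge k_0}D^2(x^{k+1},x^k)/D(x^k,x^{k-1})<+\infty$; Lemma 5.1 then delivers $\sum_{k\ge k_0}D(x^{k+1},x^k)<+\infty$, and H4 forces $\{x^k\}$ to converge, necessarily to its accumulation point $\tilde{x}$. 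The only delicate step is (4.3): the KL bound, the descent H1 and the subgradient bound H2 must be aligned so that exactly the powers of $D$ match to produce the ratio $D^2(x^{k+1},x^k)/D(x^k,x^{k-1})$ that Lemma 5.1 was tailored to consume; everything else is bookkeeping on a monotone real sequence and a single telescoping sum.
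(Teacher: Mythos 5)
Your proposal is correct and follows essentially the same route as the paper: extract a convergent subsequence, use monotonicity of $\{f(x^k)\}$ from {\bf H1} and continuity from {\bf H3} to get \eqref{eq:conv1} and to choose $k_0$ along the subsequence so that \eqref{eq:conv2} holds, then chain the concavity of $\varphi$, the descent inequality {\bf H1}, the Kurdyka--Lojasiewicz inequality and {\bf H2} to obtain \eqref{eq:conv6}, and finally telescope and invoke Lemma~\ref{lem3} and {\bf H4}. The only difference is cosmetic: you spell out the telescoping sum that the paper leaves implicit in its closing sentence.
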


\begin{proof}
Let $\{x^{k_j}\}$ be a subsequence of $\{x^k\}$ converging to $\tilde{x}$. Now, from assumption {\bf H1} combined with $x^0\in\mbox{dom}f$ and $a>0$, we obtain that $x^k_j\in\mbox{dom}f$, for all $j\in\mathbb{N}$ (in particular $\tilde{x}\in\mbox{dom}f$). Thus, from assumption {\bf H3} and taking into account that $\lim_{s\to+\infty}x^{k_j}=\tilde{x}$,  it follows that $\{f(x^{k_j})\}$ converge to $f(\tilde{x})$. Since $\{f(x^k)\}$ is a decreasing sequence (it holds trivially of the assumption {\bf H1}), we obtain that the whole sequence $\{f(x^k)\}$ converges to $f(\tilde{x})$ as $k$ goes to $+\infty$ and, hence 
\begin{equation}\label{eq:conv4}
f(\tilde{x})< f(x^{k}),\qquad k\in\mathbb{N}.
\end{equation}
In particular, there exists $N\in \mathbb{N}$ such that 
\begin{equation}\label{eq:tec3}
f(\tilde{x})< f(x^{k})<f(\tilde{x})+\eta,\qquad k\geq N.
\end{equation}
Since \eqref{eq:conv4} holds, let us define the sequence $\{b_k\}$ given by
\[
b_k=D(x^k,\tilde{x})+2\sqrt{\frac{f(x^k)-f(\tilde{x})}{a}}+\frac{b}{a}\varphi(f(x^k)-f(\tilde{x})).
\]
As $D(.,\tilde{x})$ and $\varphi$ are continuous it follows that $0$ is an accumulation point of the sequence $\{b_k\}$ and hence there exists $k_0:=k_{j_0}>N$ such that \eqref{eq:conv2} holds. In particular, as $k_0>N$, from \eqref{eq:tec3} it also holds \eqref{eq:conv1}.

%%%%%%%%%%%%%%%%%%%%%%%%%%%%%%%%%
From \eqref{eq:conv1} combined with $x^{k_0}\in B_D(\tilde{x},\rho)$ (it follows from \eqref{eq:conv2}), we have that
\[
x^{k_0}\in B_D(\tilde{x},\rho)\cap [f(\tilde{x})<f<f(\tilde{x})+\eta].
\]
So, since $\tilde{x}$ is a point where $f$ satisfies the Kurdyka-Lojasiewicz inequality it follows that $0\notin\partial f(x^{k_0})$. Moreover, assumption {\bf H2} combined with the definition of $dist(0,\partial f(x^k))$, yields 
\[
bD(x^k,x^{k-1})\geq \|w^k\|\geq dist(0,\partial f(x^k)),\qquad k=1,2,\ldots.
\]
Thus, again from the Kurdyka Lojasiewicz inequality of $f$ at $\tilde{x}$, it follows that
\begin{equation}\label{eq:conv7}
\varphi'(f(x^{k_0})-f(\tilde{x}))\geq\frac{1}{bD(x^{k_0},x^{k_0-1})}.
\end{equation}
On the other hand, the concavity of the function $\varphi$ implies that
\[
\varphi(f(x^{k_0})-f(\tilde{x}))-\varphi(f(x^{k_0+1})-f(\tilde{x}))\geq \varphi'(f(x^{k_0})-f(\tilde{x}))(f(x^{k_0})-f(x^{k_0+1})),
\]
which, combined with $\varphi'>0$ and assumption {\bf H1} yields
\[
\varphi(f(x^{k_0})-f(\tilde{x}))-\varphi(f(x^{k_0+1})-f(\tilde{x}))\geq \varphi'(f(x^{k_0})-f(\tilde{x}))aD^2(x^{k_0+1},x^{k_0}).
\]
Therefore, \eqref{eq:conv6} follows by combining the last inequality with \eqref{eq:conv7}.  

The proof of the latter part follows from \eqref{eq:conv6} combined with Lemma~\ref{lem3} and assumption {\bf H4}, which concludes the proof of the lemma.
\end{proof}
%%%%%%%%%%%%%%%%%%%%%%%%%%%

\begin{lemma}\label{lemmatheo2}
Assume that assumptions {\bf H1}, {\bf H2} and {\bf H3} hold. Then, there exists a $k_0\in \mathbb{N}$ such that
\begin{equation}\label{eq:conv10}
x^k\in B_D(\tilde{x},\rho), \qquad k> k_0.
\end{equation}
\end{lemma}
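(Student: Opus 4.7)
The plan is to show by induction on $k \geq k_0$ that $x^k \in B_D(\tilde x,\rho)$, where $k_0$ is the index furnished by Lemma~\ref{lemma:conver1}. The base case $k = k_0$ is immediate from \eqref{eq:conv2}, which already majorizes $D(x^{k_0},\tilde x)$. Assuming that $x^j \in B_D(\tilde x,\rho)$ for every $k_0 \leq j \leq k$, I write $X_j := D(x^{j+1},x^j)$ and $\Delta_j := f(x^j)-f(\tilde x)$, and aim to control $\sum_{j=k_0}^{k} X_j$ uniformly so that the triangle inequality together with \eqref{eq:conv2} forces $D(x^{k+1},\tilde x)<\rho$.

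The first step is to note that the derivation of \eqref{eq:conv6} in the proof of Lemma~\ref{lemma:conver1} used only that $x^{k_0}$ lies in $B_D(\tilde x,\rho) \cap [f(\tilde x)<f<f(\tilde x)+\eta]$. By \eqref{eq:conv1} and the inductive hypothesis, this property is shared by every $x^j$ with $k_0 \leq j \leq k$; repeating the argument verbatim at each such $j$ yields
\[
\frac{b}{a}\bigl[\varphi(\Delta_j) - \varphi(\Delta_{j+1})\bigr] \;\geq\; \frac{X_j^{2}}{X_{j-1}}, \qquad k_0 \leq j \leq k.
\]
The second step applies the arithmetic--geometric mean inequality $X_{j-1} + X_j^2/X_{j-1} \geq 2 X_j$, which combined with the display above gives
\[
X_j \;\leq\; \tfrac{1}{2}X_{j-1} + \frac{b}{2a}\bigl[\varphi(\Delta_j)-\varphi(\Delta_{j+1})\bigr].
\]

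The third step is to sum these estimates from $j = k_0+1$ up to $j = k$. The right-hand side telescopes, and the left-hand side, after the cancellation with the shifted sum $\tfrac{1}{2}\sum X_{j-1}$, reduces to
\[
\sum_{j=k_0+1}^{k} X_j \;\leq\; X_{k_0} + \frac{b}{a}\varphi(\Delta_{k_0}),
\]
after using the monotonicity of $\varphi$ and assumption \textbf{H1} to pass from $\varphi(\Delta_{k_0+1})$ to $\varphi(\Delta_{k_0})$. Adding $X_{k_0}$ to both sides, bounding $X_{k_0}\le \sqrt{\Delta_{k_0}/a}$ by a direct application of \textbf{H1}, and invoking the triangle inequality, I obtain
\[
D(x^{k+1},\tilde x) \;\leq\; D(x^{k_0},\tilde x) + \sum_{j=k_0}^{k} X_j \;\leq\; D(x^{k_0},\tilde x) + 2\sqrt{\Delta_{k_0}/a} + \frac{b}{a}\varphi(\Delta_{k_0}) \;<\; \rho,
\]
the final strict inequality being precisely \eqref{eq:conv2}. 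This closes the induction.

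The main obstacle I anticipate is the bootstrap circularity: the Kurdyka--Lojasiewicz inequality is available only at points of $U\supset B_D(\tilde x,\rho)$, and the only way to certify that $x^j$ is eligible to use it is via the very membership $x^j \in B_D(\tilde x,\rho)$ that I am trying to establish. Phrasing the argument as an induction on $k$ with the uniform bound \eqref{eq:conv2} serving as the invariant that is stable under the KL--plus--AM-GM machinery is what defuses the circularity. A secondary technical point is that the telescoping should start at $j=k_0+1$ rather than $j=k_0$, because the AM--GM step applied at $j=k_0$ would involve $X_{k_0-1}$, which is not controlled by the hypotheses of Lemma~\ref{lemma:conver1}; this is why the $X_{k_0}$ term appears separately and has to be absorbed via \textbf{H1}.
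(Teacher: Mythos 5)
Your proposal is correct and follows essentially the same route as the paper's proof: an induction whose invariant is the bound \eqref{eq:conv2}, with the KL inequality applied at each in-ball iterate to get the estimate \eqref{eq:conv6}, followed by the AM--GM step, telescoping from $j=k_0+1$ onward, a separate bound on $D(x^{k_0+1},x^{k_0})$ via \textbf{H1}, and the triangle inequality. The two technical points you flag (the bootstrap circularity and the need to start the telescoping at $k_0+1$) are handled in the paper in exactly the same way.
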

\begin{proof}
The proof is by induction on $k$. It follows trivially from the {\bf H1} that sequence $\{f(x^k)\}$ is decreasing and
\begin{equation}\label{eq:conv9}
D(x^{k+1},x^k)\leq \sqrt{\frac{f(x^{k})-f(x^{k+1})}{a}}, \qquad k\in \mathbb{N}.
\end{equation}  
Moreover, as {\bf H3} also holds, from Lemma~\ref{lemma:conver1} it follows that there exists $k_0\in\mathbb{N}$ such that \eqref{eq:conv2}, \eqref{eq:conv1} hold and, hence
\begin{equation}\label{eq:conv21}
x^{k_0}\in B_D(\tilde{x},\rho),\qquad 0<f(x^{k_0})-f(x^{k_0+1})< f(x^{k_0})-f(\tilde{x}),
\end{equation}
which, combined with \eqref{eq:conv9} $(k=k_0)$, give us
\begin{equation}\label{eq:conv14}
D(x^{k_0+1},x^{k_0})\leq \sqrt{\frac{f(x^{k_0})-f(\tilde{x})}{a}}.
\end{equation}
Now, from the triangle inequality, combining with the last expression and \eqref{eq:conv2}, we obtain
\[
D(x^{k_0+1},\tilde{x})\leq \sqrt{\frac{f(x^{k_0})-f(\tilde{x})}{a}} + D(x^{k_0},\tilde{x})< \rho,
\]
which implies that $x^{k_0+1}\in B_D(\tilde{x},\rho)$. 

Suppose now that \eqref{eq:conv10} holds for all $k=k_0+1,\ldots,k_0+j-1$. In this case, for $k=k_0+1,\ldots,k_0+j-1$, it holds \eqref{eq:conv6} and, consequently
\begin{equation}\label{eq:conv12}
\sqrt{D(x^{k},x^{k-1})(b/a)[\varphi(f(x^k)-f(\tilde{x}))-\varphi(f(x^{k+1})-f(\tilde{x}))]}\geq D(x^{k+1},x^k).
\end{equation}
Thus, since for $r,s\geq 0$ it holds $r+s\geq 2\sqrt{rs}$, considering, for $k=k_0+1,\ldots,k_0+j-1$ 
\[
r=D(x^k, x^{k-1}),\;s=(b/a)[\varphi(f(x^k)-f(\tilde{x}))-\varphi(f(x^{k+1})-f(\tilde{x}))],
\]
from the inequality \eqref{eq:conv12}, it follows, for $k=k_0+1,\ldots,k_0+j-1$, that
\[
2D(x^{k+1},x^k)\leq  D(x^k, x^{k-1})+\frac{b}{a}[\varphi(f(x^k)-f(\tilde{x}))-\varphi(f(x^{k+1})-f(\tilde{x}))].
\]
So, adding member to member, with $k=k_0+1,\ldots,k_0+j-1$, we obtain
\begin{multline*}
\sum_{i=k_0+1}^{k_0+j-1}D(x^{i+1},x^i)+D(x^{k_0+j},x^{k_0+j-1})\leq  D(x^{k_0+1}, x^{k_0})+\frac{b}{a}[\varphi(f(x^{k_0+1})-f(\tilde{x}))\\ -\varphi(f(x^{k_0+j})-f(\tilde{x}))],
\end{multline*}
from which we obtain
\begin{equation}\label{eq:conv13}
\sum_{i=k_0+1}^{k_0+j-1}D(x^{i+1},x^i)\leq  D(x^{k_0+1}, x^{k_0})+\frac{b}{a}\varphi(f(x^{k_0+1})-f(\tilde{x}))\leq D(x^{k_0+1}, x^{k_0})+\frac{b}{a}\varphi(f(x^{k_0})-f(\tilde{x}))
\end{equation}
where the last inequality follows of the second inequality in \eqref{eq:conv21} and because $\varphi$ is increasing.
Now, using the triangle inequality and taking into account that $D(x,y)\geq 0$ for all $x,y\in M$, we have
\[
D(x^{k_0+j},\tilde{x})\leq D(x^{k_0+j},x^{k_0})+D(x^{k_0},\tilde{x})\leq D(x^{k_0},\tilde{x})+D(x^{k_0+1},x^{k_0})+\sum_{i=k_0+1}^{k_0+j-1}D(x^{i+1},x^i),
\]
which, combined with \eqref{eq:conv13}, yields
\[
D(x^{k_0+j},\tilde{x})\leq D(x^{k_0},\tilde{x})+2D(x^{k_0+1},x^{k_0})+\frac{b}{a}\varphi(f(x^{k_0})-f(\tilde{x})).
\]
Therefore, from the last inequality, combined with \eqref{eq:conv14} and \eqref{eq:conv2}, we conclude that $x^{k_0+j}\in B_D(\tilde{x},\rho)$, which completes the induction proof.
\end{proof}

\subsubsection*{Proof of Theorem~\ref{theoconverg}}
Note that Lemma~\ref{lemmatheo2} combined with Lemma~\ref{lemma:conver1} implies that \eqref{eq:somaconver} holds and, in particular, that the sequence $\{x^k\}$ converge to $\tilde{x}\in M$. Thus, from the assumption {\bf H3} combined with assumption {\bf H1}, it follows $f(x^k)\to f(\tilde{x})$, as $k\to+\infty$. Now, combining \eqref{eq:somaconver} with assumption {\bf H2}, it follows that $\{(x^k,w^k)\}$ converges $(\tilde{x},0)$ as $k$ goes to $+\infty$. Therefore, from Definition~\ref{eq:sublimiting} we conclude that $0\in \partial f(\tilde{x})$, which tell us that $\tilde{x}$ is a critical point of $f$.

\section{Inexact proximal method for KL functions on Riemannian manifold}\label{sec6}
In this section we recall the exact proximal point method in the Riemannian context proposed by Ferreira and Oliveira $\cite{FO2000}$ and propose an inexact version of it in that context. 

Consider the following optimization problem
\begin{eqnarray}\label{po:conv}
\begin{array}{clc}
  & \min  f(x) \\
   & \textnormal{s.t.}\,\,\, x\in M,\\
\end{array}
\end{eqnarray}
where $f:M\to\mathbb{R}\cup\{+\infty\}$ is a proper lower semicontinuous function bounded from below. 

The proximal point method to solve optimization problems of the
form \eqref{po:conv} generates, for a starting point $x^0\in M$, a sequence $\{x^k\}\subset M$ as it follows:
\begin{equation}\label{eq:prox1_1}
x^{k+1}\in \mbox{argmin}_{y\in M}\left\{f(y)+\frac{\lambda_k}{2} d^2(y,x^k)\right\},
\end{equation}
where $\{\lambda_k\}$ is a sequence of positive numbers.  In the particular case that $M$ is a Hadamard manifold, $\mbox{dom}f=M$ and $f$ is convex, Ferreira and Oliveira \cite{FO2000} proved that for each $k\in \mathbb{N}$ the function $f(.)+d^2(.,x^k):M\to\mathbb{R}$ is 1-coerciva and, consequently, the well definedness of the sequence $\{x^k\}$ with $x^{k+1}$ being uniquely determined. Moreover, considering $\sum_{k=0}^{+\infty}1/\lambda_k=+\infty$ and that $f$ has minimizer, the authors proved convergence of the sequence $\{f(x^k)\}$ to the minimum value and convergence from the sequence $\{x^k\}$ to a minimizer point. Note that from \eqref{eq:prox1_1} combined with the assumption of convexity of the function $f$ and first order optimality condition associated to the subproblem \eqref{eq:prox1_1}, 
\begin{equation}\label{eq:prox0001}
f(x^{k+1})+\frac{\lambda_k}{2}d^2(x^{k+1},x^k)\leq f(x^k),
\end{equation}
\begin{equation}\label{eq:prox2}
\forall\; k\in\mathbb{N},\quad \exists\; w^{k+1}\in\partial f(x^{k+1}),
\end{equation}
\begin{equation}\label{eq:prox3}
\|w^{k+1}\|= \lambda_kd(x^{k+1},x^k).
\end{equation} 

Next we present an inexact version of the proximal point method in the Riemannian context.

Take $x^0\in \mbox{dom}f$, $0<\bar{\lambda}\leq\tilde{\lambda}<+\infty$, $b>0$ and $\theta\in (0,1]$. For each $k=0,1,\ldots$, choose $\lambda_k\in [\bar{\lambda},\tilde{\lambda}]$, and find $x^{k+1}\in M$, $w^{k+1}\in T_{x^{k+1}}M$ such that
\begin{equation}\label{eq:prox4}
f(x^{k+1})+\frac{\theta\lambda_k}{2}D^2(x^{k+1},x^k)\leq f(x^k),
\end{equation}
\begin{equation}\label{eq:prox5}
\forall\; k\in\mathbb{N},\quad \exists\; w^{k+1}\in\partial f(x^{k+1}),
\end{equation}
\begin{equation}\label{eq:prox6}
\|w^{k+1}\|\leq b\lambda_kD(x^{k+1},x^k),
\end{equation} 
where $D:M\times M\to\mathbb{R}$ is a quasi distance continuous.
\begin{remark}
Note that if $\theta=b=1$, \eqref{eq:prox6} holds with equal, $D$ is the Riemannian distance $d$, $M$ is a Hadamard manifold, $\mbox{dom}f=M$ and $f$ is convex, and we recover the exact proximal point method generated by \eqref{eq:prox0001}, \eqref{eq:prox2} and \eqref{eq:prox3}. On the other hand, if $M=\mathbb{R}^n$ and $D$ is the Euclidean distance, we are with the inexact version of the proximal point method proposed by Attouch et al., in \cite{Attouch2010}.
\end{remark}

\begin{theorem}
Let $f:M\to\mathbb{R}\cup\{+\infty\}$ be a proper lower semicontinuous KL function which is bounded from below. Moreover, assume that assumption {\bf H3} holds. If a sequence $\{x^k\}$ generated by \eqref{eq:prox4}, \eqref{eq:prox5} and \eqref{eq:prox6} is bounded and $D$ satisfies assumption {\bf H4}, then it converges to some critical point $\bar{x}$ of $f$.
\end{theorem}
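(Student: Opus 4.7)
The plan is to recognize the inexact proximal iteration as a special case of the abstract descent framework analyzed in Section~\ref{sec5}, so that Theorem~\ref{theoconverg} delivers the convergence. Concretely, I would verify that the sequence $\{x^k\}$ generated by \eqref{eq:prox4}--\eqref{eq:prox6} satisfies the hypotheses \textbf{H1}--\textbf{H4}, identify an accumulation point $\bar x$ as a critical point of $f$, and then invoke Theorem~\ref{theoconverg}. One may assume, via Remark~\ref{remark1000}, that $x^{k+1}\neq x^k$ for all $k$, otherwise the method already terminates at a critical point.

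The verification of \textbf{H1}--\textbf{H4} is essentially bookkeeping. From \eqref{eq:prox4} together with $\lambda_k\geq\bar\lambda$, one obtains the descent inequality
\[
f(x^{k+1})+\frac{\theta\bar\lambda}{2}D^2(x^{k+1},x^k)\leq f(x^k),
\]
so \textbf{H1} holds with $a=\theta\bar\lambda/2$. Combining \eqref{eq:prox5}--\eqref{eq:prox6} with $\lambda_k\leq\tilde\lambda$ yields $w^{k+1}\in\partial f(x^{k+1})$ with $\|w^{k+1}\|\leq b\tilde\lambda\,D(x^{k+1},x^k)$, which is \textbf{H2} (constant $b\tilde\lambda$). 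Hypothesis \textbf{H3} is an assumption of the theorem, and \textbf{H4} is imposed on $D$ by hypothesis.

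The crux of the proof is then to produce an accumulation point at which Theorem~\ref{theoconverg} can be applied, and for that I must show the KL inequality holds at some accumulation point. Boundedness of $\{x^k\}$ together with Hopf--Rinow furnishes $\bar x\in M$ and a subsequence $x^{k_j}\to\bar x$. To obtain the KL property at $\bar x$ from the hypothesis that $f$ is a KL function, I must verify $\bar x\in\mbox{dom}\,\partial f$. Since $\{f(x^k)\}$ is decreasing (by \textbf{H1}) and $f$ is bounded from below, telescoping gives $\sum_k D^2(x^{k+1},x^k)<+\infty$, hence $D(x^{k+1},x^k)\to 0$ and, by \textbf{H2}, $\|w^{k+1}\|\to 0$. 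Along the subsequence, \textbf{H3} together with monotonicity forces $f(x^{k_j})\to f(\bar x)$; then Definition~\ref{eq:sublimiting} (via a standard diagonal extraction from $\mbox{Graph}\,\hat\partial f$) yields $0\in\partial f(\bar x)$, so $\bar x$ is critical and in particular lies in $\mbox{dom}\,\partial f$. Consequently, since $f$ is a KL function, there exist $\eta>0$, a neighborhood $U$ of $\bar x$, and $\varphi$ as in Definition~\ref{dkl} satisfying the KL inequality at $\bar x$; continuity of $D(\cdot,\bar x)$ permits a choice of $\rho>0$ with $B_D(\bar x,\rho)\subset U$. Theorem~\ref{theoconverg} then applies and gives that the whole sequence $\{x^k\}$ converges to $\bar x$, which is a critical point of $f$.

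The main obstacle is the intermediate step establishing $\bar x\in\mbox{dom}\,\partial f$: one has to combine the telescoping descent from \textbf{H1} with the subgradient control in \textbf{H2} and the closedness property packaged into the definition of the limiting subdifferential (Definition~\ref{eq:sublimiting}), while also ensuring, via \textbf{H3} and monotonicity of $\{f(x^k)\}$, that $f(x^{k_j})\to f(\bar x)$ so the graph-closure convergence is valid. Once criticality of $\bar x$ and the associated KL neighborhood are in hand, the conclusion is a direct invocation of Theorem~\ref{theoconverg}.
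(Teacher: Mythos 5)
Your proof is correct and follows essentially the same route as the paper: verify \textbf{H1} (with $a=\theta\bar\lambda/2$), \textbf{H2} (with constant $b\tilde\lambda$), \textbf{H3} and \textbf{H4}, produce an accumulation point $\bar x$ via Hopf--Rinow, and invoke Theorem~\ref{theoconverg}. Your additional step establishing $\bar x\in\mbox{dom}\,\partial f$ (via $D(x^{k+1},x^k)\to 0$, $\|w^{k+1}\|\to 0$, $f(x^{k_j})\to f(\bar x)$ and closedness of the limiting subdifferential) is a refinement the paper omits --- the paper only checks $\bar x\in\mbox{dom}\,f$, whereas membership in $\mbox{dom}\,\partial f$ is what is actually needed to invoke the KL property of $f$ at $\bar x$ before applying Theorem~\ref{theoconverg}.
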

\begin{proof}
Since $\{x^k\}$ is bounded, by Roph-Rinow's theorem the sequence $\{x^k\}$ has an accumulation point on M. Let $\bar{x}$ be an accumulation point of $\{x^k\}$ and $\{x^{k_j}\}$ a subsequence converging to $\bar{x}$. Now, from \eqref{eq:prox4} combined with $x^0\in\mbox{dom}f$ and $\theta\in (0,1]$, we obtain that $x^k_j\in\mbox{dom}f$, for all $j\in\mathbb{N}$ (in particular $\bar{x}\in\mbox{dom}f$). Therefore, as assumption {\bf H1} holds with $a=(\theta\bar{\lambda})/2$, assumption {\bf H2} holds by the definition of the method and assumptions {\bf H3} and {\bf H4} hold by the assumption of the theorem, the result follows by directly applying Theorem~\ref{theoconverg}.   
\end{proof}

\section{Inexact descent method for KL functions on Riemannian manifold}\label{sec7}

In this section we recall the gradient method in the Riemannian context to solve optimization problems of the form \eqref{po:conv} in the case that $\mbox{dom}f=M$ and $f$ is a $C^1$ function, and extend the convergence results established by da Cruz Neto et al. \cite{XLO1998} and Papa Quiroz et al. \cite{PP08}, respectively, for convex and quasiconvex functions on Riemannian manifolds of positive curvature, for KL functions on Hadamard manifolds. 

\subsection{The steepest descent method with some known stepsize rule}
Given $x^0\in M$, the classic steepest descent method generates a sequence $\{x^k\}$ given by
\begin{equation}\label{eq:sdm1_1}
x^{k+1}=\exp_{x^k}(-t_k\mbox{grad} f(x^k)),
\end{equation}
where exp is the exponential map and $t_k$ is some positive stepsize. 

\noindent{\bf Armijo search}:
 If the sequence $\{t_k\}$ is obtained by
\begin{equation} \label{eq:sl}
 t_k := \max \left\{2^{-j} : j\in {\mathbb N},\, f \left(exp _{x^{k}}(2^{-j}\mbox{grad} f(x^k)\right)\leq f(x^{k})- \alpha 2^{-j}\,\|\mbox{grad} f(x^{k})\|^2\right\},
\end{equation}
with $\alpha\in (0,1)$, we are with the Armijo search. Note that, in this case, zero can be an accumulation point of the sequence $\{t_k\}$. However, when $f$ is $L$-Lipschitzian gradient (see next definition) zero is not accumulation point of the sequence $t_k$.

The next definition was proposed by da Cruz Neto et al. \cite{XLO1998}.

\begin{definition}
Let $f:M\to\mathbb{R}$ be a function $C^1$ and $L>0$. $f$ is said to have $L$-Lipschitzian gradient if, for any $x,y\in M$ and any geodesic segment $\gamma:[0,r]\to M$ joining $x$ and $y$, we have
\[
\|\mbox{grad}f(\gamma(t))-P_{\gamma(a)\gamma(t)}\mbox{grad}f(x)\|\leq r l(t), \qquad t\in [0,r],
\]
where $l(t)$ denotes the length of the segment between $\gamma (0)=x$ and $\gamma(t)$. In particular, if $M$ is a Hadamard manifold, then the last inequality reduces to
\[
\|\mbox{grad}f(\gamma(t))-P_{x\gamma(t)}\mbox{grad}f(x)\|\leq r d(\gamma(t),x), \qquad t\in [0,r].
\]
\end{definition}
\noindent{\bf Fixed step}(See Burachik et al. \cite{Bur1995} and da Cruz Neto et al. \cite{XLO1998})

Given $\delta_1,\delta_2>0$ such that $L\delta_1+\delta_2<1$, where $L$ is the Lipschtz constant associated to $\mbox{grad}f$, if sequence $\{t_k\}$ is such that
\[
t_k\in\left(\delta_1,\frac{2}{L}(1-\delta_2)\right),
\]
we are with the fixed step rule.

Let us now consider the following assumption:

\begin{assumption}\label{eq:sdm4}
There exists a function $\phi:\mathbb{R}_+\to\mathbb{R}_+$ such that
\begin{itemize}
\item [a)] There exist $\alpha\in (0,1)$ and $\tau_{\alpha}>0$, such that $\forall t\in (0,\tau_{\alpha}]$, $\phi(t)\leq \alpha t$;
\item [b)] There exist $\beta>0$ and $\tau_{\beta}\in (0,+\infty]$, such that $\forall t\in (0,\tau_{\beta}]\cap\mathbb{R}$, $\phi(t)\geq \beta t^2$;
\item [c)] For all $k=0,1,\ldots$, $f(x^{k+1})\leq f(x^k)-\phi(t_k)\|\mbox{grad}f(x^k)\|^2$ and $0<t_k\leq \tau_{\beta}$ in \eqref{eq:sdm1_1};
\item [d)] There exist $\gamma>1$, $\tau_{\gamma}>0$, such that $\forall k$, $t_k\geq \tau_{\gamma}$ or  
\[
\mbox{there exists}\quad \bar{t}_k\in [t_k,\gamma t_k ]:\; f(\exp_{x^k}(-\bar{t}_k\mbox{grad}f(x^{k}))\geq f(x^k)-\phi(\bar{t}_k)\|\mbox{grad}f(x^k)\|^2.
\]
\end{itemize}
\end{assumption}
\begin{remark}
The above assumption was first considered by Kiwiel et al. \cite{Kiwiel1996} in the Euclidean context. They observed that the steepest descent methods with Armijo search and fixed step both satisfy Assumption~\ref{eq:sdm4} with
\[
\phi(t)=\alpha t,\quad \beta=\alpha,\quad \gamma=2\quad \mbox{and}\quad \tau_{\alpha}=\tau_{\beta}=\tau_{\gamma}=1
\] 
and
\[
\phi(t)=\beta t^2,\quad \beta=\frac{\delta_2 L}{2(1-\delta_2)},\quad \tau_{\gamma}=\delta_1,\quad \tau_{\beta}=\frac{L}{2(1-\delta_2)},\quad \alpha\in (0,1)\quad \tau_{\alpha}=\alpha/\beta,
\] 
respectively. Under Assumption~\ref{eq:sdm4}, quasi convexity of the function $f$ and that the solution set of the problem \eqref{po:conv} is not-empty, Kiwiel et al., proved full convergence of the sequence generated by the method to a critical point. Papa Quiroz et al. \cite{PP08}, considering Assumption~\ref{eq:sdm4}, generalized the convergence result presented in \cite{Kiwiel1996} to the Riemannian context in the particular case that M has nonnegative curvature. 
\end{remark}

Considering a sequence $\{x^k\}$, generated by \eqref{eq:sdm1_1}, satisfying Assumption~\ref{eq:sdm4} with the following reformulation of the item $d)$,
\begin{equation}\label{eq:assumpmod1}
\exists\; \gamma>1,\; \tau_{\gamma}>0:\; t_k\geq \tau_{\gamma},\qquad k=0,1,\ldots,
\end{equation}
then, we have at least the steepest descent method with Armijo search and with fixed step (in the case that the objective function is $L$-Lipschitzian gradient) which satisfies those assumptions.
\subsection{General convergence result}
Next we present a general descent method to solve the optimization problem \eqref{po:conv}. From now on, in this section $f$ denotes a $C^1$ function with $L$-Lipschitz gradient.

Given $r_1, r_2>0$, and $x^0\in M$, consider the sequence $\{x^k\}$ generated as follows: 
\begin{equation}\label{eq:sdm2}
f(x^{k+1})+r_1 D^2(x^{k+1}, x^k)\leq f(x^k);
\end{equation}
\begin{equation}\label{eq:sdm3}
\|\mbox{grad}f(x^k)\|\leq r_2D(x^{k+1},x^k),
\end{equation}
where $D:M\times M\to\mathbb{R}$ is a quasi distance.
Note that if $\{x^k\}$ is generated by \eqref{eq:sdm1_1}, then \eqref{eq:sdm3} does not necessarily happens. This is due to the fact that the geodesic through $x^k$ with velocity $-\mbox{grad}f(x^k)$ is not necessarily minimal.  
 
The following lemma provides us a class of sequences which falls into the general category delineated by the general descent method \eqref{eq:sdm2} and \eqref{eq:sdm3}.

\begin{lemma}
Let $\{x^k\}$ be the sequence generated by \eqref{eq:sdm1_1} satisfying Assumption~\ref{eq:sdm4} with the item $d)$ replaced by condition \eqref{eq:assumpmod1}. Assume that there exists $s_1,s_2>0$ such that
\begin{equation}\label{equ:quasidist10}
s_1d(x,y)\leq D(x,y)\leq s_2 d(x,y),\qquad x, y\in M.
\end{equation}
Then $\{x^k\}$ satisfies \eqref{eq:sdm2}. Moreover, if $M$ is a Hadamard manifold, then $\{x^k\}$ also satisfies \eqref{eq:sdm3}.
\end{lemma}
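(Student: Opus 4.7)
The plan is to verify each inequality by reducing to the classical Riemannian distance $d$ and then sandwich with the quasi-distance via the two-sided bound $s_1 d\le D\le s_2 d$.

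For \eqref{eq:sdm2}, I would start from item (c) of Assumption~\ref{eq:sdm4}, which reads $f(x^{k+1})\le f(x^k)-\phi(t_k)\|\mbox{grad}f(x^k)\|^2$, together with $0<t_k\le\tau_\beta$. Applying item (b) on this range gives $\phi(t_k)\ge\beta t_k^2$, so
\[
f(x^{k+1})\le f(x^k)-\beta\,t_k^2\|\mbox{grad}f(x^k)\|^2.
\]
Next, since $x^{k+1}=\exp_{x^k}(-t_k\mbox{grad}f(x^k))$, the geodesic segment $t\mapsto\exp_{x^k}(-t\,\mbox{grad}f(x^k))$, $t\in[0,t_k]$, joins $x^k$ to $x^{k+1}$ with length $t_k\|\mbox{grad}f(x^k)\|$, so by definition of the Riemannian distance $d(x^{k+1},x^k)\le t_k\|\mbox{grad}f(x^k)\|$. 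Squaring and inserting yields $f(x^{k+1})\le f(x^k)-\beta\,d^2(x^{k+1},x^k)$. Finally, using the right-hand bound $D(x,y)\le s_2\,d(x,y)$, I get $d^2\ge D^2/s_2^{\,2}$, and \eqref{eq:sdm2} holds with $r_1:=\beta/s_2^{\,2}$.

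For \eqref{eq:sdm3}, the Hadamard hypothesis is essential: on a Hadamard manifold the exponential map at every point is a diffeomorphism, so the geodesic segment above is the unique (hence minimal) segment joining $x^k$ to $x^{k+1}$, and we have the equality $d(x^{k+1},x^k)=t_k\|\mbox{grad}f(x^k)\|$ rather than merely an inequality. Hence $\|\mbox{grad}f(x^k)\|=d(x^{k+1},x^k)/t_k$. Now I invoke the replacement condition \eqref{eq:assumpmod1}, which says $t_k\ge\tau_\gamma$ for every $k$, to bound $1/t_k\le 1/\tau_\gamma$; combining with the left-hand bound $s_1 d(x,y)\le D(x,y)$ gives
\[
\|\mbox{grad}f(x^k)\|\;=\;\frac{d(x^{k+1},x^k)}{t_k}\;\le\;\frac{D(x^{k+1},x^k)}{s_1\,\tau_\gamma},
\]
so \eqref{eq:sdm3} holds with $r_2:=1/(s_1\tau_\gamma)$.

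The main (essentially the only) subtlety is the direction of the inequality $d(x^{k+1},x^k)\le t_k\|\mbox{grad}f(x^k)\|$: this bound is automatic on any complete manifold (and is exactly what is needed to conclude \eqref{eq:sdm2}), but to derive \eqref{eq:sdm3} one needs the reverse inequality, which forces the geodesic generated by the algorithm to be minimal. This is precisely where the assumption that $M$ is Hadamard enters, since on Hadamard manifolds every geodesic through $\exp_{x^k}$ remains minimizing; without it, the geodesic $t\mapsto\exp_{x^k}(-t\,\mbox{grad}f(x^k))$ might cease to be minimizing before reaching $x^{k+1}$, and the conclusion \eqref{eq:sdm3} would fail in general.
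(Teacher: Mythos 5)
Your proof is correct and follows essentially the same route as the paper's: combine items (b) and (c) of Assumption~\ref{eq:sdm4} with $d(x^{k+1},x^k)\le t_k\|\mbox{grad}f(x^k)\|$ and the bound $D\le s_2 d$ for \eqref{eq:sdm2}, then use minimality of geodesics on Hadamard manifolds to upgrade to equality and invoke \eqref{eq:assumpmod1} with $s_1 d\le D$ for \eqref{eq:sdm3}. Your constant $r_1=\beta/s_2^{\,2}$ is in fact the correct one (the paper writes $\beta/s_2$, an immaterial slip), and your closing remark on why the Hadamard hypothesis is needed matches the paper's own observation preceding the lemma.
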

\begin{proof}
From items b) and c) of Assumption~\ref{eq:sdm4}, we obtain
\[
f(x^{k+1})+\beta t_k\|\grad f(x^k)\|^2\leq f(x^k).
\]
Now, from \eqref{eq:sdm1_1} it follows that $d(x^{k+1},x^k)\leq t_k\|\grad f(x^k)\|$ which, combined with last inequality and \eqref{equ:quasidist10}, yields
\[
f(x^{k+1})+\frac{\beta}{s_2}D^2(x^{k+1},x^k)\leq f(x^k).
\]
Thus, \eqref{eq:sdm2} it holds with $r_1=\beta/ s_2$. In the particular case that $M$ is a Hadamard manifold, then the geodesic through $x^k$ with velocity $-\mbox{grad} f(x^k)$ is minimal and, hence
\begin{equation}\label{eq:minim1}
d(x^{k+1},x^k)=t_k\|\grad f(x^k)\|, \qquad k\in\mathbb{N}.
\end{equation}
Therefore, combining the last equality with first inequality in \eqref{equ:quasidist10} and assumption \eqref{eq:assumpmod1}, the condition \eqref{eq:sdm3} is obtained with $r_2=1/(s_1\tau_{\gamma})$, and the proof is completed.
\end{proof}
%%%%%%%%%%%%%%%%%%%%%%%%%%%%
\begin{remark}
Note that we could obtain \eqref{eq:minim1} in a more general situation, namely, if the injectivity radius of $M$ is bounded from below by a constant $r > 0$. We recall that the injectivity radius of $M$ (see, for example, do Carmo~\cite[page 271]{MP92} or Sakai~\cite[Definition 4.12, page 110]{S96}) is defined by
$\mbox{\it i}(M):=\inf\{\mbox{\it i}_x: x\in M\},$
where
\[
\mbox{\it i}_x:=\sup\{\epsilon >0:exp_{x}\mid_{B_{\epsilon}(0_{x})} \, \mbox{is a diffeomorphism}\}.
\]
In the particular case that $M$ is a Hadamard manifold $\mbox{\it i}(M):=+\infty$. If the sectional curvature $K$ of $M$ satisfies
\[
0<K_{\min}\leq K\leq K_{\max},
\]
there exists $r>0$ such that $\mbox{\it i}(M)\geq r>0$ (see do Carmo \cite[page 275]{MP92}). However, $i(M)$ may be equal to zero for a complete but noncompact Riemannian manifold $M$, see Sakai~\cite[page 112]{S96}. The assumption on injectivity radius was used by Lageman \cite[Theorem 2.1.19, page 92]{Lageman2007} and Absil et al. \cite[Theorem 7.4.3, page 149]{absil2008}.
\end{remark}
%%%%%%%%%%%%%%%%%%%%%%%%%%%%

Next, we present the main result of this section.
\begin{theorem}
Assume that $f$ is bounded from below and that there exist $s_1,s_2>0$ such that \eqref{equ:quasidist10} holds. If $f$ is a KL function, then each bounded sequence $\{x^k\}$ generated by \eqref{eq:sdm2} and \eqref{eq:sdm3} converges to some critical point $\bar{x}$ of $f$.
\end{theorem}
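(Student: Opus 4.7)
The plan is to recognize the iteration described by \eqref{eq:sdm2}--\eqref{eq:sdm3} as an instance of the abstract framework of Section~\ref{sec5}, and then invoke Theorem~\ref{theoconverg}. Concretely, I will verify that any sequence $\{x^k\}$ produced by \eqref{eq:sdm2}--\eqref{eq:sdm3} satisfies assumptions \textbf{H1}--\textbf{H4}; once this is in place, boundedness of $\{x^k\}$ combined with the Hopf--Rinow theorem supplies an accumulation point $\tilde{x}\in M$ at which, by the KL hypothesis, the Kurdyka--\L ojasiewicz inequality holds, so Theorem~\ref{theoconverg} delivers $x^k\to\tilde{x}$ together with $0\in\partial f(\tilde{x})$.

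Assumption \textbf{H1} is immediate from \eqref{eq:sdm2} with $a:=r_1$. Assumption \textbf{H3} is trivial because $f$ is $C^1$ on $M=\mbox{dom}f$. For \textbf{H4}, the left half of \eqref{equ:quasidist10} gives $d(x^{k+1},x^k)\leq (1/s_1)D(x^{k+1},x^k)$, so summability of the $D$-increments forces $\{x^k\}$ to be Cauchy in the Riemannian metric $d$, and the completeness of $(M,d)$ supplied by Hopf--Rinow provides the limit in $M$.

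The genuine issue is \textbf{H2}. Because $f$ is $C^1$, one has $\partial f(x^{k+1})=\{\grad f(x^{k+1})\}$, so we must bound $\|\grad f(x^{k+1})\|$ by a constant multiple of $D(x^{k+1},x^k)$. Now \eqref{eq:sdm3} only controls $\grad f$ at $x^k$, not at $x^{k+1}$, so the $L$-Lipschitz hypothesis on $\grad f$ is indispensable: along a minimizing geodesic joining $x^k$ and $x^{k+1}$ (whose existence is again guaranteed by Hopf--Rinow), the isometry property of the parallel transport together with the Lipschitz inequality yields
\begin{align*}
\|\grad f(x^{k+1})\|&\leq\|\grad f(x^{k+1})-P_{x^kx^{k+1}}\grad f(x^k)\|+\|\grad f(x^k)\|\\
&\leq L\,d(x^{k+1},x^k)+r_2\,D(x^{k+1},x^k).
\end{align*}
A second application of the left half of \eqref{equ:quasidist10} then produces \textbf{H2} with $b:=r_2+L/s_1$ and $w^{k+1}:=\grad f(x^{k+1})$.

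Once \textbf{H1}--\textbf{H4} are in place, the proof closes quickly. Fix an accumulation point $\tilde{x}\in M$ of the bounded sequence $\{x^k\}$ and invoke the KL hypothesis at $\tilde{x}$ to obtain a neighborhood $U$, a level $\eta\in(0,+\infty]$ and the function $\varphi$ as in Definition~\ref{dkl}. Because $D$ and $d$ are topologically equivalent by \eqref{equ:quasidist10}, some $D$-ball $B_D(\tilde{x},\rho)$ fits inside $U$, and Theorem~\ref{theoconverg} applies, giving both $x^k\to\tilde{x}$ and the criticality of $\tilde{x}$. The main obstacle in this proof is clearly the verification of \textbf{H2}: the $L$-Lipschitz assumption on $\grad f$ and the two-sided equivalence \eqref{equ:quasidist10} are exactly what is needed to transfer the $x^k$-side gradient bound \eqref{eq:sdm3} into the $x^{k+1}$-side bound required by the abstract machinery.
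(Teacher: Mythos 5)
Your proposal is correct and follows essentially the same route as the paper: verify \textbf{H1}--\textbf{H4} (with \textbf{H2} obtained by parallel-transporting $\grad f(x^k)$ to $x^{k+1}$ and using the $L$-Lipschitz gradient together with \eqref{eq:sdm3}) and then apply Theorem~\ref{theoconverg}. Your bookkeeping is in fact slightly more careful than the paper's: you correctly convert $d(x^{k+1},x^k)$ into $D(x^{k+1},x^k)$ via \eqref{equ:quasidist10}, arriving at $b=r_2+L/s_1$ where the paper states $b=L+r_2$, and you explicitly note that a $D$-ball around $\tilde{x}$ fits inside the KL neighborhood $U$.
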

\begin{proof}
The assumption {\bf H1} follows from \eqref{eq:sdm2} with $a=r_1$. Note that,
\[
\|\mbox{grad}f(x^{k+1})\|=\|\mbox{grad}f(x^{k+1})-P_{x^k,x^{k+1}}\mbox{grad}f(x^{k})+P_{x^k,x^{k+1}}\mbox{grad}f(x^{k})\|,\qquad k\in\mathbb{N}.
\]
Thus, from the triangle inequality, using that $f$ has $L$-Lipschitz gradient and condition \eqref{eq:sdm3}, it follows that the assumption {\bf H2} holds with $b=L+r_2$. Since $\{x^k\}$ is bounded, by the Roph-Rinow's theorem it has an accumulation point on $M$. Assumption {\bf H3} follows immediately from the definition of $f$. Finally, assumption {\bf H4} follows trivially of Lemma~\ref{lemma:conver1} combined with the condition \eqref{equ:quasidist10} and Hopf-Rinow's Theorem. Therefore, being $f$ a KL function, the result of the theorem follows by directly applying Theorem~\ref{theoconverg}.
\end{proof}
\section{Conclusion}
In this paper we present an unified framework for the convergence analysis of classical descent methods when the objective function satisfies Kurdyka-Lojasiewicz inequality. In particular we answer Problems 1 and 2 presented in the introduction.

\end{document}